\numberwithin{equation}{section}
\newtheorem{theorem}{Theorem}[section]
\newtheorem{lemma}[theorem]{Lemma}
\newtheorem{remark}[theorem]{Remark}
\newcommand{\bx}{\boldsymbol{x}}
\title[PINN-enriched FEM]{A PINN-enriched Finite Element Method for Linear Elliptic Problems}
\author{Xiao Chen}
\address{School of Mathematical Science and Technology, University of Science and Technology of China, Hefei 230026, China.}
\email{xiao\_chen@mail.ustc.edu.cn}
\author{Yixin Luo}
\address{Suzhou Institute for Advanced Research, University of Science and Technology of China, Suzhou 215000, China.}
\email{seeing@ustc.edu.cn}
\author{Jingrun Chen}
\address{School of Mathematical Science and Technology, University of Science and Technology of China, Hefei 230026, China.}
\email{jingrunchen@ustc.edu.cn}
\date{March 2025}
\begin{document}

\begin{abstract}
    In this paper, we propose a hybrid method that combines finite element method (FEM) and physics-informed neural network (PINN) for solving linear elliptic problems. This method contains three steps: (1) train a PINN and obtain an approximate solution $u_{\theta}$; (2) enrich the finite element space with $u_{\theta}$; (3) obtain the final solution by FEM in the enriched space. In the second step, the enriched space is constructed by addition $v + u_{\theta}$ or multiplication $v \cdot u_{\theta}$, where $v$ belongs to the standard finite element space. We conduct the convergence analysis for the proposed method. Compared to the standard FEM, the same convergence order is obtained and higher accuracy can be achieved when solution derivatives are well approximated in PINN. Numerical examples from one dimension to three dimensions verify these theoretical results. For some examples, the accuracy of the proposed method can be reduced by a couple of orders of magnitude compared to the standard FEM.
\end{abstract}

\maketitle

\section{Introduction}

Linear elliptic partial differential equations (PDEs)~\cite{hanlin} demonstrate importances in various applications of science and engineering, including fluid flow, heat conduction, and electrostatics. They are characterized by their solutions' smoothness and the existence of a unique solution under appropriate boundary conditions. A plethora of methods have been proposed for numerically solving linear elliptic PDEs, and we focus our attention on the finite element method (FEM)\cite{citeulike:8936200,Ciarlet2002TheFE,johnson1988numericalso}, physics-informed neural networks (PINN)\cite{raissi2019686} as well as Deep Ritz method\cite{weinan2017thedr}. 

The finite element method is a numerical technique for solving PDEs that has been widely used in engineering and physics due to its flexibility and accuracy. It involves approximating the solution over small, simple geometric elements, which approximate the entire domain of interest. 

 The standard finite element method involves the construction of a set of basis functions that span the solution space, followed by the assembly of a system of equations that is then solved for the unknown coefficients. However, this approach can be computationally expensive and rely on the quality of mesh, especially for large-scale problems. Posteriori error estimate\cite{ainsworth19971} and adaptive mesh refinement\cite{tian2016242} has been widely used to improve the results and decrease the time cost. 

Recently, significant advancement has been achieved in numerical methods for PDEs with the introduction of physics-informed neural networks. These networks are a class of deep learning algorithms trained on the governing equations of the physical systems, rather than solely on the data.

PINNs have demonstrated the ability to approximate solutions to different types of PDEs\cite{jin2021109951}. They are particularly promising for problems where traditional numerical methods may struggle, such as those with complex geometries\cite{berg201828}, heterogeneous materials, or multi-scale features\cite{liu2020multiscaledn}. The key innovation of PINNs is the incorporation of physical laws directly into the learning process, which ensures that the learned solution satisfies the underlying equations exactly, without the need for a large amount of training data. 

However, neural network-based methods suffer from limited accuracy, and the training process is time-consuming. Due to the lack of an efficient optimization method for the training process, they are hard to train to arrive at the global optimal state, especially when the penalty term is used to handle the boundary condition. A number of new models \cite{lyu2022110930,sun2024115830,dong2021114129} have been proposed aiming to improve the performance of PINN.

Some previous works have explored methods based the balance of traditional methods and PINN. In~\cite{sun2024115830}, the author only optimizes the parameters in output layer and take neural network as basis in Discontinuous Galerkin method\cite{dgforell,mu2014432} to make training process a linear solver. In~\cite{pinn-fem20250114}, domain is decomposed into two non-overlapping regions: a finite element domain for boundary and a neural network domain for interior region. PINN is multiplied by finite element basis in finite element domain for enforcing the Dirichlet boundary conditions. In~\cite{franck2024113144}, PINN can be used as a prior to correct the basis for discontinuous Galerkin methods.

Along with this line, we propose to enrich the traditional finite element basis functions with PINNs. In this new method, there are three steps to numerically solve PDEs: (1) train a PINN and obtain an approximate solution $u_{\theta}$; (2) enrich the finite element space with $u_{\theta}$; (3) obtain the final solution by FEM in the enriched space. In the second step, the enriched space is constructed by addition $v + u_{\theta}$ or multiplication $v \cdot u_{\theta}$, where $v$ belongs to the standard finite element space. The theoretical and numerical results show that the enriched finite element method using PINNs offers better accuracy benefited by PINN and convergence order by FEM. 

The paper is organized as follows: In Section 2, we introduce the concept of Physics-Informed Neural Networks and classical finite element method; in Section 3, we present our method for enriching finite element basis functions with PINNs; in Section 4, we provide error analysis of the modified finite element method; in Section 5, we demonstrate the effectiveness of our approach through a series of numerical experiments, comparing the performance of the enriched finite element method against traditional FEM; finally, we conclude with a discussion of the potential applications of this method and the challenges that remain to be addressed in future research.

\section{Preliminaries}

We first introduce the physics-informed neural network, a novel numerical method based on neural networks for solving PDEs, and the classical finite element method. 

\subsection{Physics-informed neural networks}

Consider the linear elliptic equation 
\begin{equation}\label{2.1}
    \left\{
    \begin{aligned}
        -\nabla \cdot (a(\bx)\nabla u(\bx))+c(\bx)u(\bx) =& f(\bx), \quad & & \bx \in \Omega, 
        \\
        u(\bx)=&g(\bx), & & \bx \in \partial\Omega, 
    \end{aligned}
    \right.
\end{equation}
where $\Omega$ is smooth enough.

PINNs use the fact that classical fully-connected neural networks are smooth functions of their inputs, as long as their activation functions are also smooth, to approximate the solution to (2.1). The smoothness of neural networks makes sure that after adding it to finite element space, the space is still a subspace of Sobolev space. 

In contrast to traditional numerical methods such as the finite element method and finite difference method (FDM), where system parameters are determined by solving linear systems of equations, the degrees of freedom (DoF) in Physics-Informed Neural Networks correspond to the weights and biases of the neural network architecture. These parameters are obtained through the minimization of a predefined loss function. Unlike conventional methods, the optimization process in PINNs is inherently more complex, primarily due to the nonlinear nature of typical activation functions employed in neural networks. 

In our case, the PINN is a smooth neural network with one to three hidden layers that takes as the input the variable x. We denote the network $u_{\theta}$, which is parameterized by $\theta \in \Theta$.

The core idea of PINN is to transform the original PDE into the following minimization problem:
\begin{equation}\label{2.2}
    \theta = \mathop{\arg\min}\limits_{\theta \in \Theta} J(\theta),
\end{equation}
where $J(\theta) = J_{r}(\theta) + \lambda J_{b}(\theta)$.

We introduce two different terms in (2.2): $J_{r}(\theta)$represents the loss of residual of PDE and $J_{b}(\theta)$ represents the loss of boundary conditions. $\lambda$ is the penalty coefficient. The residual loss function is defined by
\begin{equation}\label{2.3}
    J_{r}(\theta)= \int_{\Omega }( -\nabla \cdot (a(x)\nabla u_{\theta}(x))+c(x)u_{\theta}(x)-f(x)) ^{2}\mathrm{d}x
\end{equation}
while the boundary loss function is given by
\begin{equation}\label{2.4}
    J_{b}(\theta)= \int_{\partial\Omega }( u_{\theta}(x)-g(x)) ^{2}\mathrm{d}x.
\end{equation}
There is another way to train a neural network, which is called the deep Ritz method. In this method, the loss function $J_{r}(\theta)$ is replaced by:
\begin{equation}\label{2.7}
    J_{R}(\theta)= \int_{\Omega }\frac{1}{2}a(x)|\nabla u_{\theta}(x)|^{2}+c(x)u_{\theta}^{2}(x)-u_{\theta}f(x)\mathrm{d}x.
\end{equation}

In practice, the integrals in (2.3) and (2.4) are approximated using a Monte-Carlo method. This method relies on sampling a certain number of so-called ``collocation points'' in order to approximate the integrals. For computational simplicity, we restrict our domain to rectangular (in 2D) or cuboidal (in 3D) geometries, which enables the implementation of uniform collocation points as a practical alternative. Then, the minimization problem is solved using a gradient-type method, such as ADAM or L-BFGS, which corresponds to the learning phase.

After introducing physics-informed neural networks, we give a brief introduction of classical finite element method. 

\subsection{classical finite element scheme}
We present the classical finite element scheme for discretizing the PDE(2.1) with $g=0$ for simplicity.

1. We first write the variational formulation of PDE(2.1): we say u $\in H_{0}^{1}(\Omega)$ is a weak solution of problem(2.1) provided
\begin{equation}\label{vf}
    \int_{\Omega} a(x)\nabla u(x) \cdot \nabla v(x)+c(x)u(x)v(x) \mathrm{d}x=\int_{\Omega}f(x)v(x) \mathrm{d}x
\end{equation}
for all $v \in V = H_{0}^{1}(\Omega)$.

We denote 
\begin{equation}\label{2.9}
    B(u,v)=\int_{\Omega} a(x)\nabla u(x) \cdot \nabla v(x)+c(x)u(x)v(x) \mathrm{d}x,F(v)=\int_{\Omega}f(x)v(x) \mathrm{d}x.
\end{equation}

2. Let $V_{h} \subset H_{0}^{1}(\Omega)$ be any (finite-dimensional) subspace, with basis function $\{\phi_{1}, \phi_{2}, ..., \phi_{n}\}$,the solution of finite element method denoted by $u_{h} \in V_{h}$ satisfy the following equations:
\begin{equation}\label{2.10}
     \int_{\Omega} a(x)\nabla u_{h}(x) \cdot \nabla \phi_{i}(x)+c(x)u_{h}(x)\phi_{i}(x) \mathrm{d}x=\int_{\Omega}f(x)\phi_{i}(x) \mathrm{d}x \quad  1\le i \le n.
\end{equation}
Let $\mathcal{T}^{h}$ denote a subdivision of Ω, where $h$ denote the size of mesh: the length of interval in 1d, edge length of triangle in 2d and tetrahedron in 3d. The Lagrange finite element space $V_{h}$ is taken as the piecewise polynomial space, which is defined by:
\begin{equation}
    V_{h}=\{v\in H^{1}(\Omega)|\ \forall T\in \mathcal{T}^{h}, v|_{T} \in P^{k}(T)\}.
\end{equation}
adding the Dirichlet boundary condition, space is changed to 
\begin{equation}
    V_{h}=\{v\in H_{0}^{1}(\Omega)|\ \forall T\in \mathcal{T}^{h}, v|_{T} \in P^{k}(T)\},
\end{equation}
where $P^{k}$ denote the space of polynomials of degree less than k. 

3. Because $u_{h} \in V_{h}$,we can write $u_{h} = \sum_{j=1}^{n} u_{j}\phi_{j}$. Let $A_{ij}=B[\phi_{i},\phi_{j}]$, $F_{i}=F[\phi_{i}]$ for $i,j=1,...,n$. Set $\mathbf{U}=(u_{j}),\mathbf{A}=(A_{ij}),\mathbf{F}=(F_{i})$. Then equations(2.8) is equivalent to solving the (square) matrix equation
\begin{equation}\label{2.11}
    \mathbf{A} \mathbf{U} =\mathbf{F}. 
\end{equation}

\section{Enriching Approximation Space of FEM with PINN}
Having reviewed the classical finite element method, we recognize that enhancing the method's accuracy requires $V_{h}$ to effectively approximate the solution $u$. Conventionally, $V_{h}$ is constructed as a piecewise polynomial space with locally supported basis functions. In contrast, PINNs offer a fundamentally different approach, as they operate as global approximators. Motivated by this distinction, we propose a novel function space that incorporates a neural network component $u_{\theta}$, aiming to combine the strengths of both local and global approximation capabilities.

We have two ways to get a function space containing $u_{\theta}$. One is simply adding $u_{\theta}$ to space $V_{h}$.

\subsection{Additive Space}
We define the additive space:

\begin{equation}
    V_{h}^{+}=V_{h}+u_{\theta}=\{v|\exists w \in V_{h},v=w+u_{\theta}\}.
\end{equation}
We use $V_{h}^{+}$ for trial function space and still use $V_{h}$ for test function space. Then the finite element problem become:
\begin{equation}
\begin{cases}
    \text{Find } u_{h} \in V_{h}^{+}, \text{which leads to } w_{h}=u_{h}-u_{\theta} \in V_{h},
    \\
    \text{such that, }\forall v \in V_{h},B[w_{h},v]=F(v)-B[u_{\theta},v].
\end{cases}
\end{equation}
This is equivalent to solving the same equation with an exact solution replaced by $w=u-u_{\theta}$. So we could easily use the frameworks of classical FEM for analysis.

\subsection{Multiplicative Space}
 Here we focus another more complicated way: each basis function of $V_{h}$ is multiplied by the function $u_{\theta}$. We denote the multiplicative space by $V_{h}^{*}$, which satisfies:
\begin{equation}\label{3.1}
    V_{h}^{*} = V_{h}*u_{\theta} = \{v|\exists w \in V_{h},v=wu_{\theta} \} = \{v|v/u_{\theta} \in V_{h} \}(\text{when}\  u_{\theta}\ne 0).
\end{equation}
We hope that compared with $V_{h}$, $V_{h}^{*}$ has a better approximation of $u$. Before analyzing the approximation ability, we need to talk about the cases when $u_{\theta}$ has zero points.

\begin{lemma}
    If $x$ is a zero point of $u_{\theta}$, any function $v \in V_{h}^{*}$ has the same zero point $x$.
\end{lemma}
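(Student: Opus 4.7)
The plan is to unpack the definition of $V_h^*$ directly: any $v \in V_h^*$ is by construction of the form $v = w \cdot u_\theta$ for some $w \in V_h$, so if $u_\theta(x)=0$ then $v(x) = w(x)\cdot 0 = 0$ regardless of the value of $w(x)$. This reduces the statement to a single-line verification.

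Concretely, I would first fix an arbitrary $v \in V_h^*$ and invoke the definition in (3.3) to extract a representative $w \in V_h$ satisfying $v = w u_\theta$ pointwise on $\Omega$. Next, evaluating both sides at the given zero $x$ of $u_\theta$ gives $v(x) = w(x)\,u_\theta(x) = 0$ since $u_\theta(x) = 0$. Because $v$ was arbitrary in $V_h^*$, the conclusion holds for every element of the space.

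The only subtle point worth noting, which I would mention briefly, is that the alternative characterization $\{v : v/u_\theta \in V_h\}$ in (3.3) is only meaningful on the set where $u_\theta \neq 0$; the productive definition $v = w u_\theta$ is the one that applies at zeros of $u_\theta$, and it is consistent with the quotient form off the zero set. There is no real obstacle here since everything is pointwise algebra, so the argument is essentially a direct consequence of the definition rather than a result that requires a genuine technical step.
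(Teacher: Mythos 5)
Your proposal is correct and follows exactly the same argument as the paper: take the representative $w \in V_{h}$ with $v = w\,u_{\theta}$ from the definition (3.3) and evaluate at $x$ to get $v(x) = w(x)\,u_{\theta}(x) = 0$. Your side remark about the quotient characterization being meaningful only off the zero set of $u_{\theta}$ is a reasonable clarification but does not change the substance.
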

\begin{proof}
    For $\forall v \in \in V_{h}^{*}$, from (3.3), $\exists w \in V_{h}$ such that
    \begin{equation}\label{3.4}
        v=wu_{\theta}
    \end{equation}
    So $v(x)=w(x)v_{\theta}(x)=0$.
\end{proof}
\newtheorem{proposition}[theorem]{Proposition}
\begin{proposition}
    If $x$ is a zero point of $u_{\theta}$, we have
    \begin{equation}\label{3.5}
        \inf_{v\in V_{h}^{*}} ||u-v||_{L^{\infty}}\ge |u(x)|
    \end{equation}
\end{proposition}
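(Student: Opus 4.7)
The plan is to deduce the proposition as an essentially immediate consequence of the previous lemma. The point is that the $L^\infty$ norm of any function dominates its pointwise absolute value at every point in the domain, so once we know that every $v \in V_h^*$ is forced to vanish at the zero $x$ of $u_\theta$, the pointwise error at $x$ is rigidly pinned to $|u(x)|$ regardless of which $v$ we choose.

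Concretely, I would proceed as follows. First, fix an arbitrary $v \in V_h^*$. By Lemma~3.1, since $x$ is a zero of $u_\theta$, we have $v(x) = 0$. Therefore
\begin{equation*}
\|u - v\|_{L^\infty} \ge |u(x) - v(x)| = |u(x) - 0| = |u(x)|.
\end{equation*}
Since this lower bound holds for every $v \in V_h^*$ and is independent of $v$, taking the infimum over $v \in V_h^*$ preserves the inequality and yields
\begin{equation*}
\inf_{v \in V_h^*} \|u - v\|_{L^\infty} \ge |u(x)|,
\end{equation*}
which is the desired statement.

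There is no real obstacle in this argument; it is a two-line consequence of Lemma~3.1 combined with the definition of the supremum norm. The only subtle point worth mentioning is a tacit assumption that the functions in $V_h^*$ (and $u$) are continuous at $x$ so that pointwise evaluation makes sense — this is justified because $u_\theta$ is smooth and elements of $V_h$ are at worst continuous piecewise polynomials, so $v = w u_\theta$ is continuous and $v(x) = 0$ is an honest pointwise statement rather than an almost-everywhere one. No further machinery (interpolation estimates, Bramble--Hilbert, etc.) is required at this stage; the proposition is a qualitative obstruction that motivates treating zeros of $u_\theta$ carefully in the subsequent approximation analysis.
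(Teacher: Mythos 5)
Your argument is correct and is essentially identical to the paper's proof: both apply Lemma~3.1 to conclude $v(x)=0$ for every $v\in V_h^*$, bound $\|u-v\|_{L^\infty}\ge |u(x)-v(x)|=|u(x)|$, and take the infimum. Your added remark about continuity justifying pointwise evaluation is a reasonable (if tacit in the paper) clarification, but the substance of the proof is the same.
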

\begin{proof}
    Lemma 3.1 shows that $\forall v \in V_{h}^{*}$, $v(x)=0$, $||u-v||_{L^{\infty}}\ge |u(x)-v(x)|=|u(x)|$. So  $\inf\limits_{v\in V_{h}^{*}} ||u-v||_{L^{\infty}}\ge |u(x)|$.
\end{proof}
Proposition 3.2 shows that the approximation error using $L_{\infty}$ norm has a lower bound when $u_{\theta}$ has a zero point. To solve this problem, we make some corrections to the space $V_{h}^{*}$. An easy way to avoid the influence of zero point is adding a constant to $u_{\theta}$ to remove all of the zero point.

For all the bounded $u_{\theta}$, there exists a constant $C$ such that $u_{\theta} + C$ has no zero point. For example, if $u_{\theta} \in [-a,b]$, $a$ and $b$ is positive, we can take $C$ as $\frac{a+b}{2}$. Then, we use the space with correction to approximate $u_{\theta}+C$, which means when we solve equation(2.1), we need to solve following equation firstly:
\begin{equation}\label{3.6}
    \begin{cases}
        -\nabla \cdot (a(x)\nabla u^{+}(x))+c(x)u^{+}(x) = f(x)+Cc(x) \quad \text{in} \ \Omega,
        \\
        u^{+}(x)=g(x)+C \quad \text{on} \ \partial\Omega.
    \end{cases}
\end{equation}
Then $u(x)$ is given by $u(x)=u^{+}(x)-C$.

Equation(3.6) is still a linear elliptic equation and $u^{+}_{\theta}=u_{\theta}+C$ has no zero point. So we can use space multiplied by $u^{+}_{\theta}$ as finite element space to solve equation(3.6) and then minus constant $C$ to obtain the numerical solution of equation(2.1).

\begin{remark}
    When $g(x)=0$ and the exact solution has no zero points in the domain $\Omega$, it remains necessary to introduce a constant term in our formulation. This precautionary measure addresses the potential existence of zero points in the neural network approximation $u_{\theta}$, particularly near the domain boundaries. We have made some numerical experiments, which have consistently demonstrated that such zero points can indeed emerge in boundary-adjacent regions, thereby justifying the inclusion of this additional constant term.
\end{remark}

\begin{remark}
An alternative approach to circumvent the issue of zero points involves local modification of the basis functions. Following mesh generation, we evaluate $u_{\theta}$ at each degree of freedom. When the computed value falls below a specified threshold, we retain the original basis function at that particular DoF rather than incorporating the product with $u_{\theta}$. Nevertheless, numerical experiments indicate that this localized adaptation strategy yields less substantial improvements compared to the previously discussed method.   
\end{remark}

Since we have used correction to avoid the influence of zero point, in the following discussion, we assume that $u_{\theta}$ is smooth and does not have a zero point, which indicates that $u_{\theta}^{-1}$ is smooth. In this case, the properties of $V_{h}$ are inherited by $V_{h}^{*}$. 

\begin{lemma}
    Assume that $u_{\theta}$ is smooth and has no zero point, $V_{h} \subset W_{p}^{k}(\Omega)$, $1\le p < \infty$. Then $V_{h}^{*}  \subset W_{p}^{k}(\Omega)$ and 
    \begin{equation}\label{3.9}
        ||vu_{\theta}||_{W_{p}^{k}(\Omega)} \le C_{n,k,p}||u_{\theta}||_{C_{k}(\Omega)}||v||_{W_{p}^{k}(\Omega)}, \quad \forall v \in W_{p}^{k}(\Omega).
    \end{equation}
    If we substitute $u_{\theta}^{-1}$ for $u_{\theta}$
    \begin{equation}\label{3.10}
        ||v/u_{\theta}||_{W_{p}^{k}(\Omega)} \le C_{n,k,p}||u_{\theta}^{-1}||_{C_{k}(\Omega)}||v||_{W_{p}^{k}(\Omega)}, \quad \forall v \in W_{p}^{k}(\Omega).
    \end{equation}
\end{lemma}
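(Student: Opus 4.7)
The plan is to invoke the generalized Leibniz rule for derivatives of products and bound the derivatives of $u_{\theta}$ pointwise by its $C^{k}$-norm. First, I would fix a multi-index $\alpha$ with $|\alpha|\le k$ and any $v \in W_{p}^{k}(\Omega)$, and expand
\[
D^{\alpha}(vu_{\theta}) \;=\; \sum_{\beta \le \alpha}\binom{\alpha}{\beta}\, D^{\beta}v \cdot D^{\alpha-\beta}u_{\theta}.
\]
Since $u_{\theta}\in C^{k}(\Omega)$, each factor satisfies $|D^{\alpha-\beta}u_{\theta}(x)|\le \|u_{\theta}\|_{C^{k}(\Omega)}$ pointwise, so the triangle inequality in $L^{p}(\Omega)$ gives
\[
\|D^{\alpha}(vu_{\theta})\|_{L^{p}(\Omega)} \;\le\; \|u_{\theta}\|_{C^{k}(\Omega)} \sum_{\beta\le\alpha}\binom{\alpha}{\beta}\,\|D^{\beta}v\|_{L^{p}(\Omega)}.
\]
Summing over all $|\alpha|\le k$ and collecting the combinatorial factors (the number of multi-indices with $|\alpha|\le k$ in $\mathbb{R}^{n}$, together with the binomials) into a single constant $C_{n,k,p}$ yields the estimate (3.9). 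As a by-product, $vu_{\theta}\in W_{p}^{k}(\Omega)$ for every $v\in V_{h}\subset W_{p}^{k}(\Omega)$, which gives the desired inclusion $V_{h}^{*}\subset W_{p}^{k}(\Omega)$.

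For (3.10), the observation is that, under the assumption that $u_{\theta}$ is smooth and has no zero point on $\overline{\Omega}$, the reciprocal $u_{\theta}^{-1}$ is again smooth: its derivatives can be written as polynomial combinations of $D^{\gamma}u_{\theta}$ divided by powers of $u_{\theta}$, and the $C^{k}$-norm $\|u_{\theta}^{-1}\|_{C^{k}(\Omega)}$ is finite. Applying the very same Leibniz-rule argument to the product $v\cdot u_{\theta}^{-1}$, with $u_{\theta}^{-1}$ playing the role previously played by $u_{\theta}$, immediately produces (3.10).

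The main obstacle is essentially bookkeeping: making the constant $C_{n,k,p}$ clean, i.e.\ confirming that it depends only on $n$, $k$, and $p$ and not on $v$ or $u_{\theta}$. A minor technical point worth flagging is that the Leibniz expansion must be justified in the weak sense for $v\in W_{p}^{k}(\Omega)$; this follows by a standard mollification/density argument using that $u_{\theta}\in C^{k}(\overline{\Omega})$, after which the pointwise bound and Minkowski's inequality finish the proof.
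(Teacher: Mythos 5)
Your proof follows essentially the same route as the paper's: expand $D^{\alpha}(vu_{\theta})$ by the Leibniz rule, bound the derivatives of $u_{\theta}$ by its $C^{k}$-norm, and absorb the combinatorial factors into $C_{n,k,p}$, then repeat with $u_{\theta}^{-1}$ in place of $u_{\theta}$. Your version is in fact slightly more careful than the paper's (which works directly with $p$-th powers and omits the argument for the reciprocal), and the mollification remark correctly addresses the only technical point the paper glosses over.
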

\begin{proof}
    using Leibniz's rule, the high order derivative is calculated by
    \begin{equation}\label{3.11}
        (fg)^{(\beta)}=\sum_{\alpha\le \beta}\prod_{i\le n}\binom{\beta_{i}}{\alpha_{i} }f^{(\alpha )}  g^{(k-\alpha )}  
    \end{equation}
    where n is dimension, $\beta=(\beta_{1},\beta_{2},...,\beta_{n})$, $\alpha= (\alpha_{1},\alpha_{2},...,\alpha_{n})$.
    \begin{equation}\label{3.12}
        \begin{split}
            ||vu_{\theta}||_{W_{p}^{k}(\Omega)}^{p}&=\sum_{|\alpha| \le k}\int_{\Omega}(|(vu_{\theta})^{(\alpha)}|^{p}\mathrm{d} x \\
            &\le C_{n,k,p}\sum_{|\alpha+\beta|\le k }\int_{\Omega} |v^{(\alpha )}u_{\theta}^{\beta)}|^{p}\mathrm{d}x\\
            &\le C_{n,k,p}||u_{\theta}||_{C_{k}(\Omega)}^{p}\sum_{|\alpha|\le k }\int_{\Omega}|v^{(\alpha )}|^{p}\mathrm{d}x\\
            &=C_{n,k,p}||u_{\theta}||_{C_{k}(\Omega)}^{p}||v||_{W_{p}^{k}(\Omega)}^{p}.
        \end{split}
    \end{equation}
\end{proof}
Lemma 3.5 establishes that the modified finite element remains conforming. This crucial property enables us to employ analogous analytical techniques for error estimates in section 4.

\subsection{Dealing with Boundary Conditions}
As the function space changes, so do the corresponding boundary conditions. Here, we only talk about the Dirichlet boundary conditions because Neumann and robin conditions can be converted directly into part of the variational formulation. 

For additive space, as mentioned in (3.2), the boundary condition is replaced by:
\begin{equation}
    w(x)=g(x)-u_{\theta}(x), \quad x \in \partial \Omega. 
\end{equation}
When PINN exactly satisfies the boundary condition, we can easily know $w(x)=0$ on boundary.

For multiplicative space, the way to handle boundary condition is similar. We define $w(x)=u(x)/u_{\theta}(x)$. The boundary conditions of $w(x)$ become:
\begin{equation}
    w(x)=g(x)/u_{\theta}(x), \quad x \in \partial \Omega. 
\end{equation}
When PINN exactly satisfies the boundary condition, $w(x)$ satisfies $w(x)=1$ on boundary.

\begin{remark}
    There is a special case when $g(x)=0$ and $u_{\theta}(x)=0$ on boundary. If we do not add a constant to the solution, any function in space $V_{h}^{*}$ satisfies zero boundary condition. So we can ignore the homogeneous condition. However, as we have mentioned in remark 3.3, we can't make sure that $u_{\theta}$ has no zero points. So (3.12) is used after adding a constant.
\end{remark}

\section{Error Analysis}
In this section, we prove that the modified FEM provides the same order as classical FEM and improve its accuracy depending on the error of PINN. We first analyze the error of PINN. 

\subsection{Error Analysis of PINN}

The $L_{2}$ error of PINN can be controlled by three parts: optimization error, generalization
error, and approximation error. Here, we mainly focus on the optimization error and approximation error. For simplicity, we set $\Omega$ to be $[0,1]^{d}$ for normalization. Following the notation of Kutyniok\cite{kuty}, 
to represents the ability of NN to approximate the exact value, we use the $L_{2}$ error of ${u}_{\theta }$, which is defined as:
\begin{equation}\label{4.1}
    \mathcal{R}(u_{\theta }) : = \int_{\Omega }( {u}_{\theta }(z_{i})-u(z_{i}) )^{2}\mathrm{d}x.
\end{equation}
In the training process, we use Adam to minimize the non-convex loss function. In most cases, we can not find the global minimum. Because the objective function is non-convex, the optimization error is unknown. Unfortunately, it is usually the largest one. There are some ways to decrease optimization error, by using random neural network, which make the loss function become quadratic form, or using other optimizer such as L-BFGS. We propose a training strategy to decrease the optimization error in Appendix A.


What is left is the approximation error. It is defined as:
\begin{equation}
   \varepsilon_{a}:=  \sup_{\theta \in \Theta } \mathcal{R}(u_{\theta })
\end{equation}
Approximation error shows the ability of neural network to approximate the exact solution. It is well studied and the results are great. Ingo Gühring et al\cite{guhring2021107} has provided the approximation rates for neural network in Sobolev spaces. In order not to lose integrity, we give his main results.
\begin{theorem}
Let d be dimension of function, $j,\tau \in \mathbb{N}_{0},k\in {0,...,j},n\in \mathbb{N}_{\ge k+1},1\le p\le \infty $ and $\mu > 0$, activation function is $\sin$ or $\tanh$. Then, there exist constants $L,C$ and $\tilde{\epsilon} $ depending on $d,n,k,p, \mu$, we have following properties.

For every $\epsilon \in(0,\tilde{\epsilon})$ and every $f \in W^{n,p}((0,1)^{d})$,there is a neural network $\phi_{\epsilon ,f}$with d-dimensional input,one-dimensional output, at most L layers and at most $M_{\epsilon}=C\epsilon^{-d/(n-k-\mu_{(k=2)})}$nonzero weights, such that
\begin{equation}
    \parallel\phi_{\epsilon ,f} -f \parallel _{W^{k,p}((0,1)^{d})}\le \epsilon \parallel f\parallel _{W^{n,p}.((0,1)^{d})}
\end{equation}
\end{theorem}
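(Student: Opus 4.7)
The plan is to follow the Gühring--Raslan strategy for smooth-activation Sobolev approximation, whose structure mirrors the classical B-spline/partition-of-unity argument but replaces spline primitives with neural-network emulations. First I would fix a resolution $h \sim \epsilon^{1/(n-k-\mu\mathbf{1}_{k=2})}$ and cover $[0,1]^d$ by an overlapping family of cubes on a uniform grid of spacing $h$, together with a smooth partition of unity $\{\psi_i\}$ subordinate to that cover. On each patch I would take $p_i$ to be the degree-$(n-1)$ averaged Taylor polynomial of $f$ centered at the patch, so that by the Bramble--Hilbert lemma
\begin{equation*}
\|f - p_i\|_{W^{k,p}(Q_i)} \lesssim h^{n-k}\, \|f\|_{W^{n,p}(Q_i)}.
\end{equation*}
Summing $\sum_i \psi_i p_i$ and using the partition-of-unity property then gives a target smooth approximant with the desired Sobolev rate.

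The second step, and the core of the argument, is to emulate $\sum_i \psi_i p_i$ by a neural network with $\sin$ or $\tanh$ activations while keeping the $W^{k,p}$ error under control. For this I would build three primitive sub-networks: (i) a network that approximates the scalar multiplication $(x,y)\mapsto xy$ to arbitrary precision in $W^{k,p}$, exploiting the fact that $\sin$ and $\tanh$ are analytic and that the second-order Taylor remainder $\sigma(a+\delta)-\sigma(a)-\sigma'(a)\delta \approx \tfrac12\sigma''(a)\delta^2$ lets one recover $x^2$ and hence products via polarization $xy=\tfrac14[(x+y)^2-(x-y)^2]$; (ii) by iterating multiplication, sub-networks emulating monomials $x^\alpha$ of any multi-degree, and hence the polynomials $p_i$; and (iii) a network approximating the bump functions $\psi_i$, again by composing scaled analytic activations. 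The product $\psi_i p_i$ is then realized by another multiplication network, and all $O(h^{-d})$ pieces are placed in parallel and summed in a final output layer.

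The third step is bookkeeping: I would control the $W^{k,p}$ error incurred by each emulation against the chain-rule and Leibniz-rule constants (which is exactly where the Sobolev norm, rather than $L^p$, matters), tune the precision of each sub-network so that its contribution is $O(h^{n-k})$ on its support, and count nonzero weights. Because each patch contributes an $O(1)$-size sub-network and there are $O(h^{-d})$ patches, the total weight count is $O(h^{-d})=O(\epsilon^{-d/(n-k-\mu\mathbf{1}_{k=2})})$, matching $M_\epsilon$, while the depth $L$ stays bounded independently of $\epsilon$ because each primitive has fixed depth.

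The main obstacle is propagating errors through the $W^{k,p}$ norm rather than the $L^p$ norm: higher derivatives of composed networks blow up under the chain rule, so one must carefully choose the inner scaling of each analytic activation (so that its argument stays in a region where its derivatives up to order $k$ are $O(1)$) and use Faà di Bruno-type estimates to keep $\|\phi_{\epsilon,f}\|_{W^{k,p}}$ from overwhelming the approximation gain. The borderline case $k=2$, reflected in the extra $\mu$, comes precisely from a logarithmic loss in this derivative accounting, which is why the exponent is stated as $n-k-\mu\mathbf{1}_{k=2}$ rather than the cleaner $n-k$.
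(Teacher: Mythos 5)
This statement is not proved in the paper at all: it is Theorem~4.1, quoted verbatim from G\"uhring and Raslan \cite{guhring2021107} (``In order not to lose integrity, we give his main results''), so there is no in-paper proof to compare your attempt against. Judged against the actual proof in that reference, your outline is essentially the right roadmap: a uniform cover of $(0,1)^d$ with an (approximate) partition of unity at scale $h$, averaged Taylor polynomials with a Bramble--Hilbert bound $\|f-p_i\|_{W^{k,p}(Q_i)}\lesssim h^{n-k}\|f\|_{W^{n,p}(Q_i)}$, emulation of squaring/multiplication via the second-order Taylor remainder of an analytic activation and polarization, parallel assembly of the $O(h^{-d})$ local pieces with bounded depth, and a weight count of $O(h^{-d})=O(\epsilon^{-d/(n-k-\mu\mathbf{1}_{k=2})})$. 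This is the standard architecture of such results and matches the cited source.

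Two caveats. First, what you have is a plan, not a proof: the entire analytic difficulty lives in your ``third step'' (propagating errors through $W^{k,p}$ via Fa\`a di Bruno/Leibniz estimates while keeping the inner scalings of the activations in a regime where all derivatives up to order $k$ stay $O(1)$), and that is precisely the part you defer. Second, your attribution of the $\mu$-loss at $k=2$ to ``a logarithmic loss'' is a guess; in \cite{guhring2021107} the loss arises because the approximate multiplication and partition-of-unity networks are controlled exactly only up to first-order derivatives, and the second-order estimate is recovered by an interpolation-type argument that costs an arbitrarily small $\mu$ in the exponent. Neither point invalidates your approach, but a complete write-up would have to supply both.
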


Because $\mu$ is arbitrary, Theorem 4.1 actually shows that order of convergence is $\frac{n-k}{d}$. This is the same as the order of FEM using high order finite element space.

\begin{remark}
    When f is analytic, we can not simply say that when n tends to infinity, $M_{\epsilon}$ tends to be a constant, since neural network can not approximate every analytic function exactly. De Ryck et al\cite{deryck2021732} shows that tanh neural networks with two hidden layers result in an exponential convergence rate, numerical results can be seen in \cite{sun2024115830}.
\end{remark}

\subsection{Error Analysis of Modified FEM}
Here, we mainly give the error of FEM using multiplicative space. Error analysis of FEM using additive space has no difference with that of classical FEM so we give its results directly. Let $u$ be the solution to the variational problem (2.6) and $u_{h}$ be the solution to the approximation problem (2.8). We now want to estimate the error $||u-u_{h}||_{V}$. We do so by the following theorems. 

\begin{theorem}[\textbf{Lax-Milgram}]
    Given a Hilbert space$(V,(.,.))$, a continuous, coercive bilinear form $B(u,v)$ and a continuous linear functional $F \in V^{'}$ satisfying
    \begin{equation}
        |B(u,v)| \le  \alpha ||u||_{V} \ ||v||_{V} \quad (u,v \in V)
    \end{equation}
    and
    \begin{equation}
        \beta ||u||_{V}^{2} \le |B(u,u)| \quad u \in V.
    \end{equation}
    Then, there exists a unique $u \in V$ such that 
    \begin{equation}
        B(u,v) = F(v) \quad \forall v \in V.
    \end{equation}
\end{theorem}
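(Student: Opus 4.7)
The plan is to reduce the variational equation $B(u,v)=F(v)$ to a fixed point problem on the Hilbert space $V$ and invoke the Banach contraction principle. The two tools I will lean on are the Riesz representation theorem (to convert bilinear/linear forms into vectors and operators on $V$) and coercivity (to get the contraction estimate).

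First, I would apply the Riesz representation theorem to $F$: since $F \in V'$ is continuous, there exists a unique $z \in V$ with $F(v) = (z,v)$ for every $v \in V$. Next, for each fixed $u \in V$, the map $v \mapsto B(u,v)$ is linear in $v$ and bounded by $\alpha \|u\|_V \|v\|_V$, so it is a continuous linear functional on $V$. Riesz again provides a unique element I will call $Au \in V$ with $B(u,v) = (Au,v)$ for all $v \in V$. A short check shows $A : V \to V$ is linear, and the continuity estimate yields $\|Au\|_V \le \alpha \|u\|_V$. The problem now reads: find $u \in V$ with $Au = z$.

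To produce such a $u$, I would introduce the auxiliary map $T_\rho u := u - \rho(Au - z)$ for a parameter $\rho > 0$ to be chosen. A fixed point of $T_\rho$ is exactly a solution of $Au = z$. To show $T_\rho$ is a contraction I compute, for $u_1, u_2 \in V$ and $w := u_1 - u_2$,
\begin{equation*}
\|T_\rho u_1 - T_\rho u_2\|_V^2 = \|w - \rho A w\|_V^2 = \|w\|_V^2 - 2\rho (Aw,w) + \rho^2 \|Aw\|_V^2.
\end{equation*}
Using $(Aw,w) = B(w,w) \ge \beta \|w\|_V^2$ from coercivity and $\|Aw\|_V \le \alpha \|w\|_V$ from continuity, this is bounded by $(1 - 2\rho\beta + \rho^2 \alpha^2)\|w\|_V^2$. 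Choosing $\rho \in (0, 2\beta/\alpha^2)$ makes the bracket strictly less than $1$, so $T_\rho$ is a strict contraction on the complete space $V$. The Banach fixed point theorem then delivers a unique $u \in V$ with $T_\rho u = u$, hence $Au = z$, which unwinds to $B(u,v) = (z,v) = F(v)$ for all $v \in V$. Uniqueness follows separately (and redundantly) from coercivity: if $B(u_1 - u_2, v) = 0$ for all $v$, set $v = u_1 - u_2$ and use $\beta \|u_1 - u_2\|_V^2 \le B(u_1-u_2, u_1-u_2) = 0$.

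There is no real obstacle here — this is a textbook argument — but the only place that requires genuine care is the contraction estimate: one must resist the temptation to use symmetry of $B$ (which is not assumed) and instead control $(Aw,w)$ only through the coercivity constant $\beta$, while controlling $\|Aw\|_V$ through the continuity constant $\alpha$, and then optimize the free parameter $\rho$ so that $1 - 2\rho\beta + \rho^2\alpha^2 < 1$.
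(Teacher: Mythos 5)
Your proof is correct: it is the standard Riesz-representation-plus-Banach-fixed-point argument for Lax--Milgram, and since the paper states this theorem as a classical result without supplying a proof, there is nothing to diverge from. One small point of care: the coercivity hypothesis as written is $\beta\|u\|_V^2\le |B(u,u)|$ with an absolute value, whereas your contraction estimate uses $(Aw,w)=B(w,w)\ge\beta\|w\|_V^2$ without it; this is closed by noting that the continuous function $u\mapsto B(u,u)$ is nonvanishing on the (path-connected) set $V\setminus\{0\}$ and hence has constant sign, so after possibly replacing $B$ and $F$ by $-B$ and $-F$ you may drop the absolute value and your argument goes through verbatim.
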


\begin{theorem}[$\mathbf{C\acute{e} a}$]
    Suppose the conditions in theorem 3.4 hold and that u solves (3.1). For the finite element variational problem (3.3) we have 
    \begin{equation}
        ||u-u_{h}||_{V} \le \frac{\alpha}{\beta} \min_{v\in V_{h}}||u-v||_{V}.
    \end{equation}
\end{theorem}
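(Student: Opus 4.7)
The plan is to combine Galerkin orthogonality with the continuity and coercivity bounds from the Lax-Milgram hypotheses, which is the standard Céa argument. First, I would note that because $V_{h} \subset V$, the variational identity $B(u,v)=F(v)$ from the continuous problem holds in particular for every $v \in V_{h}$; subtracting the discrete equation $B(u_{h},v)=F(v)$ for $v \in V_{h}$ gives the Galerkin orthogonality relation $B(u-u_{h},v)=0$ for all $v \in V_{h}$.

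Next, I would apply coercivity to translate a norm on the error into a value of the bilinear form: $\beta\,||u-u_{h}||_{V}^{2} \le |B(u-u_{h},u-u_{h})|$. For an arbitrary $v \in V_{h}$, I split the second argument as $u-u_{h}=(u-v)+(v-u_{h})$; since $v-u_{h} \in V_{h}$, Galerkin orthogonality annihilates that contribution, leaving $B(u-u_{h},u-u_{h})=B(u-u_{h},u-v)$. Continuity of $B$ then yields $|B(u-u_{h},u-v)| \le \alpha\,||u-u_{h}||_{V}\,||u-v||_{V}$. Dividing by $||u-u_{h}||_{V}$ (the case $u=u_{h}$ being immediate) and taking the infimum over $v \in V_{h}$ produces $||u-u_{h}||_{V} \le (\alpha/\beta) \min_{v \in V_{h}} ||u-v||_{V}$; the minimum is attained because $V_{h}$ is finite-dimensional and $||u-\cdot||_{V}$ is continuous.

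There is essentially no obstacle here; the only preliminary point to verify is the existence and uniqueness of $u_{h}$, but this follows from Lax-Milgram applied inside the closed subspace $V_{h}$, on which $B$ automatically inherits both continuity and coercivity with the same constants $\alpha,\beta$. The substantive content of the theorem is the quasi-optimality constant $\alpha/\beta$, and once Galerkin orthogonality is isolated, each remaining step is a single use of boundedness of $B$. This framework will apply verbatim in Section 4 once the enriched spaces $V_{h}^{+}$ or $V_{h}^{*}$ are shown to be conforming subspaces of $H_{0}^{1}(\Omega)$, so that the bilinear form continues to satisfy the Lax-Milgram hypotheses there.
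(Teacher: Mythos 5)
Your proof is correct and is the standard Céa argument (Galerkin orthogonality, coercivity, splitting $u-u_{h}=(u-v)+(v-u_{h})$, continuity, then dividing by $\|u-u_{h}\|_{V}$); the paper states this classical theorem without proof, and your argument is exactly the one it implicitly relies on. Your closing remark about the result transferring to the enriched spaces once they are shown to be conforming is also precisely how the paper uses it in Theorems 4.5--4.8.
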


By applying the preceding theorems, we establish the primary theoretical result: the finite element method employing spaces $V_{h}$ , $ V_{h}^{+}$ and $V_{h}^{*}$ achieves identical convergence orders. 

let $\mathcal{I}_{h}$ denote a global interpolator for a family of finite elements in space $V_{h}$ based on the components of $\mathcal{T}^{h}$.
\begin{theorem}
    Suppose that conditions in theorem 4.3 hold and that the corresponding shape functions have an approximation order, m, that is
    \begin{equation}
        ||u-\mathcal{I}_{h}u||_{H^{1}(\Omega)} \le Ch^{m-1}|u|_{H^{m}(\Omega)}.
    \end{equation}
    Then, the unique solution $u_{h}^{*} \in V_{h}^{*}$ to the variational problem 
    \begin{equation}
        B[u_{h}^{*},v]=F(v) \quad \forall v \in V_{h}^{*}
    \end{equation}
    satisfies
    \begin{equation}
        ||u-u_{h}^{*}||_{H^{1}(\Omega)} \le Ch^{m-1}||u_{\theta}||_{C_{1}}|\frac{u}{u_{\theta}}|_{H^{m}(\Omega)}.
    \end{equation}
\end{theorem}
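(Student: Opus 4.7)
The plan is to combine Céa's lemma with the multiplicative structure of $V_h^*$, choosing a carefully constructed element of $V_h^*$ built from the interpolant of $u/u_\theta$.

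First, I would apply Theorem 4.4 (Céa) to the problem on $V_h^*$, which is legitimate since Lemma 3.5 guarantees $V_h^* \subset H^1(\Omega)$ so that the bilinear form $B$ remains continuous and coercive on $V_h^*$ under the same constants as for $H_0^1(\Omega)$. This reduces the task to estimating the best approximation
\begin{equation*}
\min_{v \in V_h^*} \|u - v\|_{H^1(\Omega)}.
\end{equation*}
Because every $v \in V_h^*$ has the form $v = w u_\theta$ with $w \in V_h$, and because by assumption $u_\theta$ is smooth with no zero point (so $u/u_\theta$ is a well-defined $H^m$ function), I would make the specific choice $w := \mathcal{I}_h(u/u_\theta) \in V_h$, giving the candidate $v^\ast := u_\theta \cdot \mathcal{I}_h(u/u_\theta) \in V_h^*$.

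Next, I would rewrite the error of this candidate as
\begin{equation*}
u - v^\ast \;=\; u_\theta \cdot \bigl( u/u_\theta - \mathcal{I}_h(u/u_\theta) \bigr),
\end{equation*}
so that the $H^1$ norm of the error is the $H^1$ norm of a product. Here Lemma 3.5 (with $k=1$, $p=2$) applies directly and yields
\begin{equation*}
\|u - v^\ast\|_{H^1(\Omega)} \;\le\; C \|u_\theta\|_{C^1(\Omega)} \,\bigl\| u/u_\theta - \mathcal{I}_h(u/u_\theta) \bigr\|_{H^1(\Omega)}.
\end{equation*}
Then I would invoke the standard interpolation hypothesis (4.8) applied to the function $u/u_\theta$ in place of $u$:
\begin{equation*}
\bigl\| u/u_\theta - \mathcal{I}_h(u/u_\theta) \bigr\|_{H^1(\Omega)} \;\le\; C h^{m-1} \, \bigl| u/u_\theta \bigr|_{H^m(\Omega)}.
\end{equation*}
Chaining these inequalities and absorbing constants produces the stated bound.

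The main obstacle I anticipate is the justification that the hypotheses transfer cleanly to the quotient $u/u_\theta$: one must verify that $u/u_\theta \in H^m(\Omega)$ so that the interpolation estimate (4.8) is legitimately applicable, and one must confirm that the product estimate of Lemma 3.5 (stated there for the pair $(v,u_\theta)$) handles the present situation where the first factor is the interpolation error of $u/u_\theta$. Both points rely only on smoothness and nonvanishing of $u_\theta$ together with the chain/Leibniz rule, so they should be routine once stated carefully, but they are the substantive checks in the argument. Everything else — Céa, the choice of candidate, and the final chaining — is essentially bookkeeping.
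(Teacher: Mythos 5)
Your proposal is correct and follows essentially the same route as the paper: apply C\'ea's lemma, take the candidate $u_\theta\,\mathcal{I}_h(u/u_\theta)\in V_h^*$, factor the error as $u_\theta\bigl(u/u_\theta-\mathcal{I}_h(u/u_\theta)\bigr)$, bound the product via Lemma 3.5, and finish with the interpolation estimate applied to $u/u_\theta$. The checks you flag (that $u/u_\theta\in H^m$ and that Lemma 3.5 applies to the interpolation error as the first factor) are indeed the only substantive points, and the paper handles them implicitly via the standing assumption that $u_\theta$ is smooth and nonvanishing.
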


\begin{proof}
    From theorem 4.3, we obtain
    \begin{equation*}
        \begin{split}
            ||u-u_{h}^{*}||_{H^{1}(\Omega)} &\le C\min_{v \in V_{h}^{*}}||u-v||_{H^{1}(\Omega)} 
            \\
            (\mathcal{I}_{h}^{*}u \in V_{h}^{*})& \le ||u-\mathcal{I}_{h}^{*}u||_{H^{1}(\Omega)} 
            \\
            & = ||u_{\theta}(\frac{u}{u_{\theta}}-\mathcal{I}_{h}\frac{u}{u_{\theta}})||_{H^{1}(\Omega)} 
            \\
            (lemma\ 3.5)& \le C||u_{\theta}||_{C_{1}}||\frac{u}{u_{\theta}}-\mathcal{I}_{h}\frac{u}{u_{\theta}}||_{H^{1}(\Omega)} 
            \\
            (4.8) & \le  Ch^{m-1}||u_{\theta}||_{C_{1}}|\frac{u}{u_{\theta}}|_{H^{m}(\Omega)}. 
        \end{split}
    \end{equation*}
\end{proof}

\begin{theorem}
    For additive space, with the same conditions in theorem 4.5, the solution $u_{h} \in V_{h}^{+}$ to problem (3.2) satisfies 
    \begin{equation}
        ||u-u_{h}||_{H^{1}(\Omega)} \le Ch^{m-1}|u-u_{\theta}|_{H^{m}(\Omega)}.
    \end{equation}
\end{theorem}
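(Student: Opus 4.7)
The plan is to exploit the observation already made in Section 3.1 that the additive formulation (3.2) is nothing more than the classical FEM applied to the shifted problem whose exact solution is $w := u - u_\theta$. Concretely, setting $w_h := u_h - u_\theta \in V_h$, the variational problem (3.2) reads $B[w_h,v]=F(v)-B[u_\theta,v]$ for all $v\in V_h$, and since the continuous solution $w\in H^1_0(\Omega)$ (assuming $u_\theta$ matches the boundary, or after absorbing the discrepancy into the data) satisfies exactly $B[w,v]=F(v)-B[u_\theta,v]$ for all $v\in V$, the pair $(w,w_h)$ is a standard Galerkin pair over $V$ and $V_h$.

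Once this reduction is in place, I would invoke C\'ea's lemma (Theorem 4.3) applied to $w$ and $w_h$ to obtain
\begin{equation*}
    \|w - w_h\|_{H^1(\Omega)} \;\le\; \frac{\alpha}{\beta}\,\min_{v\in V_h}\|w - v\|_{H^1(\Omega)}.
\end{equation*}
The minimum on the right is then bounded above by choosing the standard Lagrange interpolant $\mathcal{I}_h w \in V_h$, and by the hypothesis (4.8) inherited from Theorem 4.5,
\begin{equation*}
    \|w - \mathcal{I}_h w\|_{H^1(\Omega)} \;\le\; C h^{m-1}|w|_{H^m(\Omega)} \;=\; C h^{m-1}|u - u_\theta|_{H^m(\Omega)}.
\end{equation*}

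Finally, I would note the trivial identity $u - u_h = (u - u_\theta) - (u_h - u_\theta) = w - w_h$, so that $\|u-u_h\|_{H^1(\Omega)} = \|w-w_h\|_{H^1(\Omega)}$, and chain the two bounds above to conclude. No new estimate beyond (4.8) and C\'ea's lemma is needed, which matches the remark in Section 3.1 that the additive case slots directly into the classical framework.

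I do not expect a real obstacle here; the only mild subtlety is ensuring that $w = u - u_\theta$ lies in the space where C\'ea's lemma applies, i.e. that the boundary contribution of $u_\theta$ is handled consistently with the treatment in Section 3.3 (where $w$ inherits the boundary data $g - u_\theta$). Under the standing assumption that $u_\theta$ is smooth enough to have $u - u_\theta \in H^m(\Omega)$ and that the boundary data are incorporated into $V_h^+$ in the usual Lagrange way, this is routine, and the resulting estimate is precisely the one stated.
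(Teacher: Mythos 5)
Your proposal is correct and is exactly the argument the paper has in mind: the paper omits the proof of this theorem, stating only that ``Error analysis of FEM using additive space has no difference with that of classical FEM,'' which is precisely your reduction to the Galerkin pair $(w,w_h)$ with $w=u-u_\theta$, followed by C\'ea's lemma and the interpolation estimate (4.8). Your closing remark about handling the boundary data $g-u_\theta$ consistently is the same caveat the paper addresses in Section 3.3, so nothing is missing.
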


\begin{remark}
    Theorem 3.6 shows that the error is controlled by term $|\frac{u}{u_{\theta}}|_{H^{m}(\Omega)}$. Because m is positive, the derivative remains the same when it minus 1. we have $|\frac{u}{u_{\theta}}|_{H^{m}(\Omega)}=|\frac{u-u_{\theta}}{u_{\theta}}|_{H^{m}(\Omega)}\le C||u_{\theta}^{-1}||_{C_{m}(\Omega)}||u-u_{\theta}||_{H^{m}(\Omega)}$. This indicates that $H^{1}$error is controlled by $||u-u_{\theta}||_{H^{m}(\Omega)}$, which has been analyzed above. Therefore, the fundamental distinction between the classical and modified function spaces lies solely in their respective coefficients, while both PINN-enriched spaces yield comparable numerical results.
\end{remark}

We now consider error estimates for $u-u_{h}^{*}$ in the L2 norm. To estimate $||u-u_{h}^{*}||_{L^{2}(\Omega)}$, we need to use a “duality” argument. Let $\omega$ be the weak solution of following equation:
\begin{equation}
    \begin{cases}
        -\nabla \cdot (a(x)\nabla w(x))+c(x)w(x) = u-u_{h}^{*} \quad \text{in} \ \Omega,
        \\
        w(x)=0 \quad \text{on} \ \partial\Omega.
    \end{cases}
\end{equation}
The variational formulation of this problem is: find $\omega \in V$ such that
\begin{equation}
    B(w,v)=(u-u_{h}^{*},v)_{L_{2}}.
\end{equation}
By theorem 3.4 and 3.5 the solution is unique and by the regularity of solution\cite{evans} $w$ satisfies:
\begin{equation}
    |w|_{H^{2}(\Omega)} \le C||u-u_{h}^{*}||_{L^{2}(\Omega)}.
\end{equation}
With these properties, we can prove the following:

\begin{theorem}
    Suppose the conditions in theorem 4.5 hold. Then, the unique solution $u_{h}^{*}$ satisfies
    \begin{equation}
        ||u-u_{h}^{*}||_{L^{2}(\Omega)} \le Ch^{m}||u_{\theta}||_{C_{1}}|\frac{u}{u_{\theta}}|_{H^{m}(\Omega)}
    \end{equation}
    and similarly, solution $u_{h} \in V_{h}^{+}$ satisfies
    \begin{equation}
        ||u-u_{h}||_{L^{2}(\Omega)} \le Ch^{m}|u-u_{\theta}|_{H^{m}(\Omega)}.
    \end{equation}
\end{theorem}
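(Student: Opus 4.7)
The plan is to apply the Aubin--Nitsche duality argument that the authors have already set up in equations (4.12)--(4.14). Testing the dual variational problem (4.13) against $v = u - u_h^*$ gives
\begin{equation*}
\|u - u_h^*\|_{L^2(\Omega)}^2 = B(w, u - u_h^*),
\end{equation*}
and since the multiplicative method is conforming (Lemma 3.5) we have Galerkin orthogonality $B(u - u_h^*, v_h) = 0$ for every $v_h \in V_h^*$. Subtracting any such $v_h$ and using continuity of $B$ yields
\begin{equation*}
\|u - u_h^*\|_{L^2(\Omega)}^2 \le \alpha \, \|w - v_h\|_{H^1(\Omega)} \, \|u - u_h^*\|_{H^1(\Omega)}.
\end{equation*}

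To pick a good $v_h$, I will mimic the interpolant used in the proof of Theorem 4.5, namely $\mathcal{I}_h^* w := u_\theta \cdot \mathcal{I}_h(w/u_\theta) \in V_h^*$. Exactly as in that proof, writing $w - \mathcal{I}_h^* w = u_\theta\bigl(w/u_\theta - \mathcal{I}_h(w/u_\theta)\bigr)$ and invoking Lemma 3.5 together with the first-order interpolation estimate (4.8) gives $\|w - \mathcal{I}_h^* w\|_{H^1(\Omega)} \le Ch \|u_\theta\|_{C_1} |w/u_\theta|_{H^2(\Omega)}$. Applying Lemma 3.5 again with $u_\theta^{-1}$ in place of $u_\theta$ bounds $|w/u_\theta|_{H^2(\Omega)}$ by a constant multiple of $\|w\|_{H^2(\Omega)}$, and then the $H^2$ regularity estimate (4.14) bounds this by $\|u-u_h^*\|_{L^2(\Omega)}$.

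Substituting these bounds back and cancelling one factor of $\|u - u_h^*\|_{L^2(\Omega)}$ from both sides leaves the gain-one-power identity $\|u - u_h^*\|_{L^2(\Omega)} \le Ch \|u_\theta\|_{C_1} \|u - u_h^*\|_{H^1(\Omega)}$. Inserting the $H^1$ estimate of Theorem 4.5 on the right-hand side then gives (4.15), with all extra factors of $\|u_\theta^{-1}\|_{C_2}$ absorbed into the constant $C$. For the additive case (4.16), problem (3.2) is literally a standard Galerkin discretisation in $V_h$ for the shifted unknown $w = u - u_\theta$, so the classical Aubin--Nitsche trick applied to $w$ produces the claim with $|u - u_\theta|_{H^m(\Omega)}$ on the right.

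The only genuinely non-routine step is the interpolation estimate of $w$ in $V_h^*$: one has to pass from $w$ to $w/u_\theta$, interpolate in the standard space, and then multiply back by $u_\theta$ while keeping track of the norms of $u_\theta$ and $u_\theta^{-1}$. Once the right interpolant $\mathcal{I}_h^* w$ is identified, the rest of the argument is a mechanical rerun of the classical duality proof, so I expect the bookkeeping of the $u_\theta$-dependent constants -- not the analytic content -- to be the main thing to watch.
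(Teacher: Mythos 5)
Your proposal is correct and follows essentially the same Aubin--Nitsche duality argument as the paper: test the dual problem with $u-u_h^*$, invoke Galerkin orthogonality to subtract an interpolant of $w$, use continuity, the first-order interpolation bound, elliptic regularity, and finally the $H^1$ estimate of Theorem 4.5. The one place you differ is actually an improvement: the paper subtracts $\mathcal{I}_h w \in V_h$, which in general does \emph{not} lie in the test space $V_h^*$, whereas you correctly use $\mathcal{I}_h^* w = u_\theta\,\mathcal{I}_h(w/u_\theta) \in V_h^*$ and track the resulting factors of $\|u_\theta\|_{C_1}$ and $\|u_\theta^{-1}\|_{C_2}$, which is what makes the orthogonality step legitimate.
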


\begin{proof}
    \begin{equation}\label{3.24}
    \begin{split}
        ||u-u_{h}^{*}||_{L^{2}(\Omega)}^{2} & = (u-u_{h}^{*}, u-u_{h}^{*}) 
        \\
        & = B[w,u-u_{h}^{*}]
        \\
        (B[u,v]=B[u_{h}^{*},v],v\in V_{h}^{*})& = B[w-\mathcal{I}_{h}w, u-u_{h}^{*}]
        \\
        & \le C||u-u_{h}^{*}||_{H^{1}(\Omega)}||w-\mathcal{I}_{h}w||_{H^{1}(\Omega)}
        \\
        & \le Ch||u-u_{h}^{*}||_{H^{1}(\Omega)}|w|_{H^{2}(\Omega)}
        \\
        & \le Ch||u-u_{h}^{*}||_{H^{1}(\Omega)}||u-u_{h}^{*}||_{L^{2}(\Omega)}.
    \end{split}
    \end{equation}
    Divide by $||u-u_{h}^{*}||_{L^{2}(\Omega)}$ on both sides, we have
    \begin{equation}\label{3.25}
        ||u-u_{h}^{*}||_{L^{2}(\Omega)} \le  Ch||u-u_{h}^{*}||_{H^{1}(\Omega)} \le Ch^{m}||u_{\theta}||_{C_{1}}|\frac{u}{u_{\theta}}|_{H^{m}(\Omega)}.
    \end{equation}
\end{proof}

The preceding theorems demonstrate that the approximation error is bounded by $|\frac{u}{u_{\theta}}| _{H^{m}(\Omega)}$ or $||u-u_{\theta}||_{H^{m}(\Omega)}$. In the ideal case where $u_{\theta}=u$, vanishes completely since 
$\frac{u}{u_{\theta}}$ becomes unity, and consequently, its derivatives evaluate to zero. This error estimate further reveals an inverse relationship between the PINN approximation error and the solution accuracy: as the PINN error decreases, the precision of the solution improves. 

However, when employing higher-order finite elements, the overall error becomes dependent on the PINN approximation error measured in the sense of $H^{m}$ seminorm. It is well-established in approximation theory that the convergence rate deteriorates when considering higher-order derivatives or operating in higher dimension. Consequently, the potential improvement offered by this approach may become less pronounced when using high-order elements, particularly in multidimensional settings.

\section{Numerical Results}
In this section, we present several test problems to demonstrate the performance of the modified finite element method and compare the numerical results with our theoretical analysis. All programs including PINN training and FEM framework run on a PC with Core i5-13400F CPU and Nvidia RTX 4060 Ti 8G GPU.

For one-dimensional problems, we employ a shallow network architecture consisting of a single hidden layer containing 20 neurons. In contrast, for problems in two or higher dimensions, we utilize a deeper architecture comprising three hidden layers with 20, 40, and 20 neurons, respectively. Additional architectural and training specifications are comprehensively documented in Table 1.

The numerical implementation employs Gaussian quadrature for computing integrals in the variational formulations, while mesh generation is handled by FEALPy. To mitigate potential issues arising from zero points in the solution domain, test solutions we choose are positive throughout the computational domain.

\begin{table}[ht]
    \centering
    \begin{tabular}{|c|c|c|c|} \hline 
         dimensions&  1d&  2d&  3d\\ \hline 
         layers-MLP&  \{1,20,1\} &  \{2,20,40,20,1\} &  \{3,20,40,20,1\}   \\ \hline 
         parameters&  61 &  1741 &  1761   \\ \hline 
         activate function& tanh & tanh & tanh  \\ \hline 
         lr& 2e-03&1e-03&3e-03 \\ \hline 
         loss function& $J_{r}(\theta)$& $J_{r}(\theta)$ or $J_{R}(\theta)$&$J_{R}(\theta)$ and $J_{r}(\theta)$ \\ \hline 
         epochs& 10000 & 10000 & 15000+10000 \\ \hline 
         collection points& 1000 & 400 &1000 \\ \hline 
    \end{tabular}
    \caption{Network structure and training parameters for different dimensions.}
    \label{tab:0}
\end{table}

Given the comparable performance of spaces $V_{h}^{+}$ and $V_{h}^{*}$, we present numerical results for both spaces in one-dimensional cases. For higher-dimensional problems, we restrict our presentation to results obtained using $V_{h}^{*}$. Due to computational complexity and memory constraints, we limit our numerical experiments to polynomial degrees not exceeding three in one-dimensional cases, two in two-dimensional cases, and one in three-dimensional cases.

The analysis presented in Section 4 reveals that the overall accuracy of our scheme is fundamentally determined by the capability of the PINN to approximate the derivatives of the solution. In our numerical implementation, we employ two complementary strategies to enhance the PINN's performance: (1) the design of a specialized boundary operator that exactly enforces boundary conditions, and (2) the implementation of a dual-loss-function approach that simultaneously accelerates training convergence and reduces approximation error. Comprehensive details regarding these implementations, along with comparative analyses, are provided in Appendix A.

\subsection{Poisson Equation in One Dimension}
The first problem we consider is the Poisson equation with Dirichlet boundary condition on \(\Omega = [0, 1]\) and the exact solution \( u = (1-x)\sin(5x)+2 \). We initiate our investigation by training a PINN with a learning rate of 0.003, comprising a total of 61 trainable parameters. Subsequently, we implement our modified finite element method using space $V_{h}^{+}$ and $V_{h}^{*}$, employing a uniform mesh discretization with $n$ intervals.

Table \ref{tab:1.1} to Table \ref{tab:1.6} present the computed $L^{2}$ and $H^{1}$ errors, along with their corresponding convergence rates,  for polynomial degrees $P^{1}$, $P^{2}$ and $P^{3}$. The comparative error analysis across different elements and norms is visualized in figure \ref{fig:log1}. Both $V_{h}^{+}$ and $V_{h}^{*}$ spaces demonstrate similar performance characteristics, consistently outperforming the classical finite element method in all considered cases.

\begin{figure}[H]
    \centering
    \begin{minipage}[t]{0.48\textwidth}
        \centering
        \includegraphics[width=\linewidth]{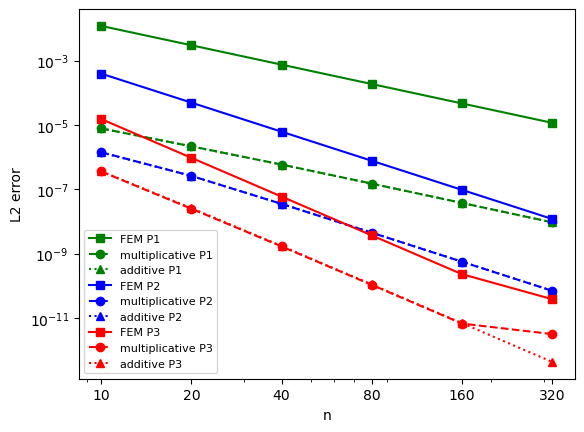}
    \end{minipage}
    \hfill 
    \begin{minipage}[t]{0.48\textwidth}
        \centering
        \includegraphics[width=\linewidth]{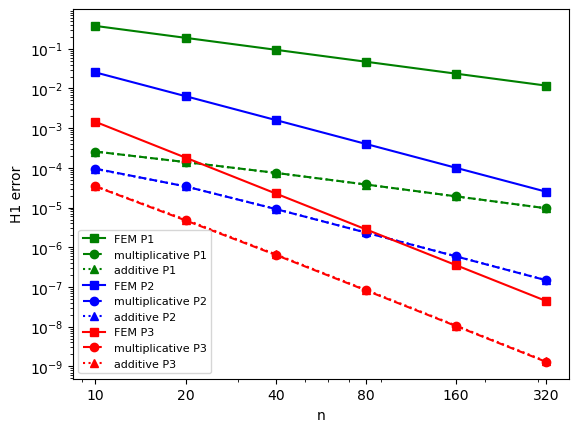}
    \end{minipage}
    \caption{left: $L^{2} error$ using different spaces, right: $H^{1} error$ using different spaces.}
    \label{fig:log1}
\end{figure}

\begin{table}[ht]
    \centering
    \setlength{\tabcolsep}{8pt}
    \begin{tabular}{|c|cc|cc|cc|} \hline 
         &  \multicolumn{2}{|c|}{$V_{h}$}&  \multicolumn{2}{|c|}{$V_{h}^{+}$}&  \multicolumn{2}{|c|}{$V_{h}^{*}$}\\ \hline 
         n&  $L^{2}$error &  order &  $L^{2}$error &  order &  $L^{2}$error &  order \\ \hline 
         10& 1.189e-02& -
&7.823e-06& - &7.811e-06& -
\\ \hline 
         20& 2.983e-03&1.995
&2.157e-06&1.859&2.175e-06&1.844
\\ \hline 
         40& 7.463e-04&1.999
&5.866e-07&1.878&5.912e-07&1.880
\\ \hline 
         80& 1.866e-04&2.000
&1.501e-07&1.966&1.513e-07&1.967
\\ \hline 
         160& 4.666e-05&2.000
&3.776e-08&1.991&3.804e-08&1.991
\\ \hline
         320& 1.166e-05&2.000&9.455e-09&1.998&9.523e-09&1.998\\ \hline
    \end{tabular}
    \caption{$L^{2}$error and order using $P^{1}$ Lagrange element, Poisson equation in 1d.}
    \label{tab:1.1}
\end{table}

\begin{table}[ht]
    \centering
    \setlength{\tabcolsep}{8pt}
    \begin{tabular}{|c|cc|cc|cc|} \hline 
         &  \multicolumn{2}{|c|}{$V_{h}$}&  \multicolumn{2}{|c|}{$V_{h}^{+}$}&  \multicolumn{2}{|c|}{$V_{h}^{*}$}\\ \hline 
         n&  $L^{2}$error &  order &  $L^{2}$error &  order &  $L^{2}$error &  order \\ \hline 
         10& 3.764e-01& -
&2.582e-04& - &2.578e-04& - 
\\ \hline 
         20& 1.887e-01&0.996
&1.394e-04&0.889&1.400e-04&0.880 
\\ \hline 
         40& 9.441e-02&0.999
&7.461e-05&0.902&7.492e-05&0.903 
\\ \hline 
         80& 4.721e-02&1.000
&3.804e-05&0.972&3.818e-05&0.973 
\\ \hline 
         160& 2.361e-02&1.000
&1.911e-05&

0.993&1.918e-05&0.993 
\\ \hline
         320& 1.180e-02&1.000&9.569e-06&0.998&9.603e-06&0.998 \\ \hline
    \end{tabular}
    \caption{$H^{1}$error and order using $P^{1}$ Lagrange element, Poisson equation in 1d.}
    \label{tab:1.2}
\end{table}

\begin{table}[ht]
    \centering
    \setlength{\tabcolsep}{8pt}
    \begin{tabular}{|c|cc|cc|cc|} \hline 
         &  \multicolumn{2}{|c|}{$V_{h}$}&  \multicolumn{2}{|c|}{$V_{h}^{+}$}&  \multicolumn{2}{|c|}{$V_{h}^{*}$}\\ \hline 
         n&  $L^{2}$error &  order &  $L^{2}$error &  order &  $L^{2}$error &  order \\ \hline 
         10& 3.924e-04& -  
&1.398e-06& -  
&1.429e-06& -  
\\ \hline 
         20& 4.940e-05&2.990
&2.625e-07&2.413&2.624e-07&2.445
\\ \hline 
         40& 6.186e-06&2.997
&3.565e-08&2.880&3.542e-08&2.889
\\ \hline 
         80& 7.736e-07&2.999
&4.556e-09&2.968&4.521e-09&2.970
\\ \hline 
         160& 9.671e-08&3.000
&5.727e-10&2.992
&5.682e-10&2.992
\\ \hline
         320& 1.209e-08&3.000&7.168e-11&2.998&7.113e-11&2.998\\ \hline
    \end{tabular}
    \caption{$L^{2}$error and order using $P^{2}$ Lagrange element, Poisson equation in 1d.}
    \label{tab:1.3}
\end{table}

\begin{table}[H]
    \centering
    \setlength{\tabcolsep}{8pt}
    \begin{tabular}{|c|cc|cc|cc|} \hline 
         &  \multicolumn{2}{|c|}{$V_{h}$}&  \multicolumn{2}{|c|}{$V_{h}^{+}$}&  \multicolumn{2}{|c|}{$V_{h}^{*}$}\\ \hline 
         n&  $L^{2}$error &  order &  $L^{2}$error &  order &  $L^{2}$error &  order \\ \hline 
         10& 2.544e-02& -  
&9.413e-05& -   
&9.569e-05& -   
\\ \hline 
         20& 6.404e-03&1.990
&3.421e-05&1.460&3.419e-05&1.485 
\\ \hline 
         40& 1.604e-03&1.998
&9.254e-06&1.886&9.194e-06&1.895 
\\ \hline 
         80& 4.011e-04&1.999
&2.363e-06&1.970&2.345e-06&1.971 
\\ \hline 
         160& 1.003e-04&2.000
&5.939e-07&1.992&5.892e-07&1.993 
\\ \hline
         320& 2.507e-05&2.000&1.487e-07&1.998 &1.475e-07&1.998 \\ \hline
    \end{tabular}
    \caption{$H^1$error and order using $P^{2}$ Lagrange element, Poisson equation in 1d.}
    \label{tab:1.4}
\end{table}

\begin{table}[H]
    \centering
    \setlength{\tabcolsep}{8pt}
    \begin{tabular}{|c|cc|cc|cc|} \hline 
         &  \multicolumn{2}{|c|}{$V_{h}$}&  \multicolumn{2}{|c|}{$V_{h}^{+}$}&  \multicolumn{2}{|c|}{$V_{h}^{*}$}\\ \hline 
         n&  $L^{2}$error &  order &  $L^{2}$error &  order &  $L^{2}$error &  order \\ \hline 
         10& 1.527e-05& - 
&3.657e-07& - 
&3.596e-07& - 
\\ \hline 
         20& 9.554e-07&3.998
&2.614e-08&3.806&2.500e-08&3.846
\\ \hline 
         40& 5.973e-08&4.000
&1.755e-09&3.896&1.693e-09&3.885
\\ \hline 
         80& 3.733e-09&4.000
&1.119e-10&3.971&1.084e-10&3.964
\\ \hline 
         160& 2.335e-10&3.999
&7.031e-12&3.993&6.833e-12&3.988
\\ \hline
         320& 3.909e-11&2.579&4.400e-13&3.998&3.220e-12&1.085\\ \hline
    \end{tabular}
    \caption{$L^{2}$error and order using $P^{3}$ Lagrange element, Poisson equation in 1d.}
    \label{tab:1.5}
\end{table}

\begin{table}[H]
    \centering
    \setlength{\tabcolsep}{8pt}
    \begin{tabular}{|c|cc|cc|cc|} \hline 
         &  \multicolumn{2}{|c|}{$V_{h}$}&  \multicolumn{2}{|c|}{$V_{h}^{+}$}&  \multicolumn{2}{|c|}{$V_{h}^{*}$}\\ \hline 
         n&  $L^{2}$error &  order &  $L^{2}$error &  order &  $L^{2}$error &  order \\ \hline 
         10& 1.448e-03& - 
&3.482e-05& -  
&3.420e-05& -  
\\ \hline 
         20& 1.813e-04&2.998
&4.976e-06&2.807&4.761e-06&2.845 
\\ \hline 
         40& 2.266e-05&3.000
&6.668e-07&2.900&6.431e-07&2.888 
\\ \hline 
         80& 2.833e-06&3.000
&8.497e-08&2.972&8.233e-08&2.966 
\\ \hline 
         160& 3.542e-07&3.000
&1.067e-08&2.993&1.036e-08&2.991 
\\ \hline
         320& 4.427e-08&3.000&1.336e-09&2.998 &1.296e-09&2.998 \\ \hline
    \end{tabular}
    \caption{$H^{1}$error and order using $P^{3}$ Lagrange element, Poisson equation in 1d.}
    \label{tab:1.6}
    \vspace{-1.0em}
\end{table}

It can be observed that when $n$ is sufficiently large, the orders of the space $V_{h}^{+}$ and $V_{h}^{*}$ coincide with that of $V_{h}$, but are slightly lower for smaller values of $n$. This indicates that the modified finite element method demonstrates more significant improvements when the degrees of freedom are limited. Furthermore, by comparing the results in these tables, it is evident that as the polynomial order increases, the enhancement provided by the method diminishes. This behavior aligns well with our theoretical findings.

In Table \ref{tab:1.5}, when we have 320 intervals, due to the excessively large condition number of the stiffness matrix, the numerical error is ultimately constrained by the limitations of machine precision. $V_{h}^{+}$ is an exception because the solution corresponding to the linear system is small.

\subsection{biharmonic equation in One Dimension}
consider the biharmonic equation with Dirichlet boundary condition on \(\Omega = [0, 1]\) and the exact solution \( u = 3x^{2}+(x+1)\sin(4x)+1 \).
In this example, we use Hermite finite element space of degree three as the classical element space \(V_{h}\), which is compared with enriched basis \(V_{h}^{+}\) and \(V_{h}^{*}\). 

Table \ref{tab:2.1}, \ref{tab:2.2} and \ref{tab:2.3} shows the error and corresponding order for the three function spaces under consideration. A notable observation is that for space $V_{h}^{*}$, the error becomes constrained by floating-point precision limitations when $n=80$, primarily due to the deteriorating condition number of the stiffness matrix. In contrast, space $V_{h}^{+}$ can circumvent this problem to a certain extent. These results collectively indicate that our proposed method maintains its enhancement capabilities when applied to higher-order partial differential equations.
  
  \begin{table}[ht]
      \centering
      \begin{tabular}{|c|cc|cc|cc|}
        \hline
         n  & $L^{2}$error & order & $H^{1}$error & order & $H^{2}$error & order\\
         \hline
        5 & 7.608e-04& - &1.320e-02& - &4.284e-01 & - \\
        \hline
        10 & 4.861e-05&3.968&1.685e-03&2.971&1.092e-01&1.972 \\
        \hline
        20 & 3.054e-06&3.992&2.116e-04&2.993&2.743e-02&1.993 \\
        \hline
        40 & 1.911e-07&3.998&2.648e-05&2.998&6.865e-03&1.998 \\
        \hline
        80 & 1.193e-08&4.001&3.311e-06&3.000&1.717e-03&2.000 \\
        \hline
      \end{tabular}
      \caption{error and order of basis $V_{h}$, biharmonic equation in 1d.}
      \label{tab:2.1}
      \vspace{-2.0em}
  \end{table}

    \begin{table}[ht]
      \centering
      \begin{tabular}{|c|cc|cc|cc|}
        \hline
         n  & $L^{2}$error & order & $H^{1}$error & order & $H^{2}$error & order\\
         \hline
        5 & 5.526e-08& - &1.119e-06& - &3.947e-05 & - \\
        \hline
        10 & 7.729e-09&2.838&2.755e-07&2.022&1.805e-05&1.129 \\
        \hline
        20 & 5.654e-10&3.773&3.956e-08&2.800&5.126e-06&1.816 \\
        \hline
        40 & 3.543e-11&3.996&4.929e-09&3.005&1.283e-06&1.999 \\
        \hline
        80 & 2.352e-12&3.913&6.526e-10&2.917&3.387e-07&1.921 \\
        \hline
      \end{tabular}
      \caption{error and order of basis $V_{h}^{+}$, biharmonic equation in 1d.}
      \label{tab:2.2}
      \vspace{-2.0em}
  \end{table}

  \begin{table}[ht]
      \centering
      \begin{tabular}{|c|cc|cc|cc|}
        \hline
         n  & $L^{2}$error & order & $H^{1}$error & order & $H^{2}$error & order\\
         \hline
        5 & 2.153e-07& - &1.553e-06& - &4.618e-05 & - \\
        \hline
        10 & 1.570e-08&3.777&2.965e-07&2.389&1.939e-05&1.252 \\
        \hline
        20 & 1.066e-09&3.881&4.203e-08&2.819&5.426e-06&1.838 \\
        \hline
        40 & 7.968e-11&3.741&5.229e-09&3.007&1.359e-06&1.998 \\
        \hline
        80 & 4.990e-11&0.675&7.142e-10&2.872&3.576e-07&1.926 \\
        \hline
      \end{tabular}
      \caption{error and order of basis $V_{h}^{*}$, biharmonic equation in 1d.}
      \label{tab:2.3}
      \vspace{-2.0em}
  \end{table}

\subsection{Linear Elliptic Equation in Two Dimension}
  In our numerical investigation of equation (2.1), we consider three distinct cases for the coefficient function: $a(x)=1$, with $c(x)=0,\ -5$ and $-2\pi^{2}+0.01$ to see the results. For the cases where $c(x) = 0$ and $-5$, we employ $J_{r}(\theta)$ as the loss to train our PINN. Subsequently, we implement the finite element method using a structured mesh, as illustrated in figure \ref{fig:1}. The numerical errors, categorized by function space and the characteristic length of right triangles in the triangulation, are systematically presented in Tables \ref{tab:3.1} - \ref{tab:3.4}.
  \begin{figure}[H]
    \centering
    \begin{minipage}[t]{0.48\textwidth}
        \centering
        \includegraphics[width=\linewidth]{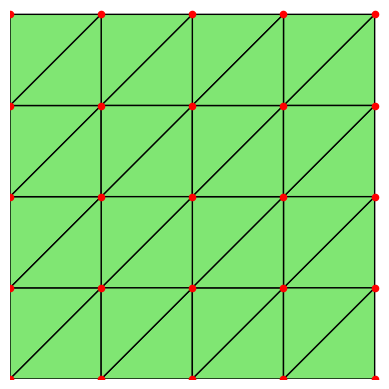}
    \end{minipage}
    \hfill 
    \begin{minipage}[t]{0.48\textwidth}
        \centering
        \includegraphics[width=\linewidth]{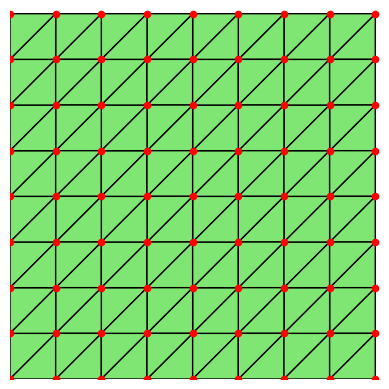}
    \end{minipage}
    \caption{left: mesh when $h=2^{-2}$, with 32 triangles and 25 nodes, right: mesh when $h=2^{-3}$, with 128 triangles and 81 nodes.}
    \label{fig:1}
\end{figure}
\begin{table}[ht]
      \centering
      \begin{tabular}{|c|c|c|c|c|c|c|c|c|}
         \hline
         basis & \multicolumn{4}{|c|}{$V_{h}$}& \multicolumn{4}{|c|}{$V_{h}^{*}$}\\
         \hline
         h   &$L^{2}$error &  order &  $H^{1}$error &  order &  $L^{2}$error &  order &  $H^{1}$error & order \\
        \hline
        $2^{-2}$ &7.394e-02&-&7.951e-01&-&1.291e-05&-&1.867e-04&-\\
        \hline
        $2^{-3}$ &1.958e-02&1.917&4.081e-01&0.962&4.553e-06&1.504&1.149e-04&0.700\\
        \hline
        $2^{-4}$ &4.969e-03&1.979&2.054e-01&0.990&1.393e-06&1.708&6.601e-05&0.800\\
        \hline
        $2^{-5}$ &1.247e-03&1.995&1.029e-01&0.998&3.693e-07&1.915&3.439e-05&0.941\\
        \hline
        $2^{-6}$ &3.120e-04&1.999&5.146e-02&0.999&9.375e-08&1.978&1.738e-05&0.985\\
        \hline
      \end{tabular}
      \caption{$L^{2}$ and $H^{1}$error and corresponding order using $P^{1}$ Lagrange element, $c(x)=0$.}
      \label{tab:3.1}
      \vspace{-2.0em}
  \end{table}

  \begin{table}[ht]
      \centering
      \begin{tabular}{|c|c|c|c|c|c|c|c|c|}
         \hline
         basis & \multicolumn{4}{|c|}{$V_{h}$}& \multicolumn{4}{|c|}{$V_{h}^{*}$}\\
         \hline
         h   &$L^{2}$error &  order &  $H^{1}$error &  order &  $L^{2}$error &  order &  $H^{1}$error & order \\
        \hline
        $2^{-2}$ &3.589e-03&-&1.134e-01&-&2.997e-06&-&8.462e-05&-\\
        \hline
        $2^{-3}$ &4.431e-04&3.018&2.895e-02&1.970&7.146e-07&2.068&3.766e-05&1.168\\
        \hline
        $2^{-4}$ &5.523e-05&3.004&7.279e-03&1.992&1.053e-07&2.762&1.116e-05&1.755\\
        \hline
        $2^{-5}$ &6.899e-06&3.001&1.822e-03&1.998&1.375e-08&2.937&2.923e-06&1.933\\
        \hline
        $2^{-6}$ &8.623e-07&3.000&4.557e-04&1.999&1.738e-09&2.984&7.398e-07&1.982 \\
        \hline
      \end{tabular}
      \caption{$L^{2}$ and $H^{1}$error and corresponding order using $P^{2}$ Lagrange element, $c(x)=0$.}
      \label{tab:3.2}
      \vspace{-2.0em}
  \end{table}

  \begin{table}[ht]
      \centering
      \begin{tabular}{|c|c|c|c|c|c|c|c|c|}
         \hline
         basis & \multicolumn{4}{|c|}{$V_{h}$}& \multicolumn{4}{|c|}{$V_{h}^{*}$}\\
         \hline
         h   &$L^{2}$error &  order &  $H^{1}$error &  order &  $L^{2}$error &  order &  $H^{1}$error & order \\
        \hline
        $2^{-2}$ &8.944e-02&-&7.995e-01&-&2.478e-05&-&3.120e-04&-\\
        \hline
        $2^{-3}$ &2.457e-02&1.864&4.088e-01&0.968&1.074e-05&1.207&2.257e-04&0.468\\
        \hline
        $2^{-4}$ &6.300e-03&1.964&2.055e-01&0.992&3.273e-06&1.714&1.312e-04&0.783\\
        \hline
        $2^{-5}$ &1.585e-03&1.991&1.029e-01&0.998&8.643e-07&1.921&6.838e-05&0.940\\
        \hline
        $2^{-6}$ &3.969e-04&1.998&5.146e-02&1.000&2.192e-07&1.980&3.456e-05&0.984 \\
        \hline
      \end{tabular}
      \caption{$L^{2}$ and $H^{1}$error and corresponding order using $P^{1}$ Lagrange element, $c(x)=-5$.}
      \label{tab:3.3}
      \vspace{-2.0em}
  \end{table}

  \begin{table}[ht]
      \centering
      \begin{tabular}{|c|c|c|c|c|c|c|c|c|}
         \hline
         basis & \multicolumn{4}{|c|}{$V_{h}$}& \multicolumn{4}{|c|}{$V_{h}^{*}$}\\
         \hline
         h   &$L^{2}$error &  order &  $H^{1}$error &  order &  $L^{2}$error &  order &  $H^{1}$error & order \\
        \hline
        $2^{-2}$ &3.775e-03&-&1.134e-01&-&7.053e-06&-&1.870e-04&-\\
        \hline
        $2^{-3}$ &4.499e-04&3.069&2.895e-02&1.970&1.463e-06&2.269&7.568e-05&1.305\\
        \hline
        $2^{-4}$ &5.545e-05&3.020&7.279e-03&1.992&2.096e-07&2.803&2.197e-05&1.784\\
        \hline
        $2^{-5}$ &6.906e-06&3.005&1.822e-03&1.998&2.715e-08&2.948&5.728e-06&1.939\\
        \hline
        $2^{-6}$ &8.625e-07&3.001&4.557e-04&1.999&3.426e-09&2.987&1.448e-06&1.984 \\
        \hline
      \end{tabular}
      \caption{$L^{2}$ and $H^{1}$error and corresponding order using $P^{2}$ Lagrange element, $c(x)=-5$.}
      \label{tab:3.4}
      \vspace{-2.0em}
  \end{table}
  The case where $c(x)=-2\pi^{2}$+0.01 presents significant computational challenges for PINN training. This difficulty arises because $2\pi^{2}$ corresponds to the first eigenvalue of the Laplace operator, rendering the training process with $J_{r}(\theta)$ as the loss function non-convergent. To address this issue, we modify the optimization framework by replacing the loss function with $J_{R}(\theta)$.
  
  The numerical errors in the L2 norm and the H1 seminorm for the different mesh and function space are shown in Table \ref{tab:3.5} and Table \ref{tab:3.6}. The results demonstrate that the accuracy of the finite element method with linear basis functions is strongly dependent on mesh refinement. When employing quadratic basis functions, the method exhibits stable and reliable performance..

  Figure \ref{fig:2d} illustrates the distribution of point-wise absolute errors computed using the two methods with an element size of $h=2^{-5}$ with quadratic basis functions. The error distribution analysis reveals a strong correlation between the PINN residuals and the multiplicative solution errors, with error concentration occurring in regions characterized by larger residuals(i.e., large error of second order derivatives). This observation suggests that adaptive mesh refinement techniques could potentially enhance the performance of our method.
  \begin{table}[ht]
      \centering
      \begin{tabular}{|c|c|c|c|c|c|c|c|c|}
         \hline
         basis & \multicolumn{4}{|c|}{$V_{h}$}& \multicolumn{4}{|c|}{$V_{h}^{*}$}\\
         \hline
         h   &$L^{2}$error &  order &  $H^{1}$error &  order &  $L^{2}$error &  order &  $H^{1}$error & order \\
        \hline
        $2^{-3}$ &4.492e-01& - &1.999e+00& - &6.153e-06& - &7.643e-05& - \\
        \hline
        $2^{-4}$ &4.311e-01&0.059&1.916e+00&0.061&8.718e-06&-0.503&6.221e-05&0.297\\
        \hline
        $2^{-5}$ &3.746e-01&0.203&1.665e+00&0.203&8.383e-06&0.057&4.574e-05&0.444\\
        \hline
        $2^{-6}$ &2.462e-01&0.606&1.094e+00&0.605&5.539e-06&0.598&2.811e-05&0.702\\
        \hline
        $2^{-7}$ &1.039e-01&1.245&4.619e-01&1.244&2.293e-06&1.272&1.227e-05&1.196 \\
        \hline
        $2^{-8}$ &3.135e-02&1.728&1.398e-01&1.724&6.844e-07&1.745&4.582e-06&1.421 \\
        \hline
      \end{tabular}
      \caption{$L^{2}$ and $H^{1}$error and corresponding order using $P^{1}$ Lagrange element, $c(x)=-2\pi^{2}$+0.01.}
      \label{tab:3.5}
      \vspace{-2.0em}
  \end{table}
  \begin{table}[ht]
      \centering
      \begin{tabular}{|c|c|c|c|c|c|c|c|c|}
         \hline
         basis & \multicolumn{4}{|c|}{$V_{h}$}& \multicolumn{4}{|c|}{$V_{h}^{*}$}\\
         \hline
         h   &$L^{2}$error &  order &  $H^{1}$error &  order &  $L^{2}$error &  order &  $H^{1}$error & order \\
        \hline
        $2^{-2}$ &3.589e-03& - &1.134e-01& - &2.665e-06& - &7.216e-05& - \\
        \hline
        $2^{-3}$ &4.431e-04&3.018&2.895e-02&1.970&7.166e-07&1.895&3.745e-05&0.946 \\
        \hline
        $2^{-4}$ &5.523e-05&3.004&7.279e-03&1.992&1.118e-07&2.680&1.197e-05&1.646\\
        \hline
        $2^{-5}$ &6.899e-06&3.001&1.822e-03&1.998&1.459e-08&2.938&3.200e-06&1.903\\
        \hline
        $2^{-6}$ &8.623e-07&3.000&4.557e-04&1.999&1.842e-09&2.986&8.146e-07&1.974\\
        \hline
        $2^{-7}$ &1.078e-07&3.000&1.139e-04&2.000&2.308e-10&2.996&2.046e-07&1.993 \\
        \hline
        $2^{-8}$ &1.347e-08&3.000&2.849e-05&2.000&2.889e-11&2.998&5.121e-08&1.998 \\
        \hline
      \end{tabular}
      \caption{$L^{2}$ and $H^{1}$error and corresponding order using $P^{2}$ Lagrange element, $c(x)=-2\pi^{2}$+0.01.}
      \label{tab:3.6}
      \vspace{-2.0em}
  \end{table}

\begin{figure}
    \centering
    \includegraphics[width=\linewidth]{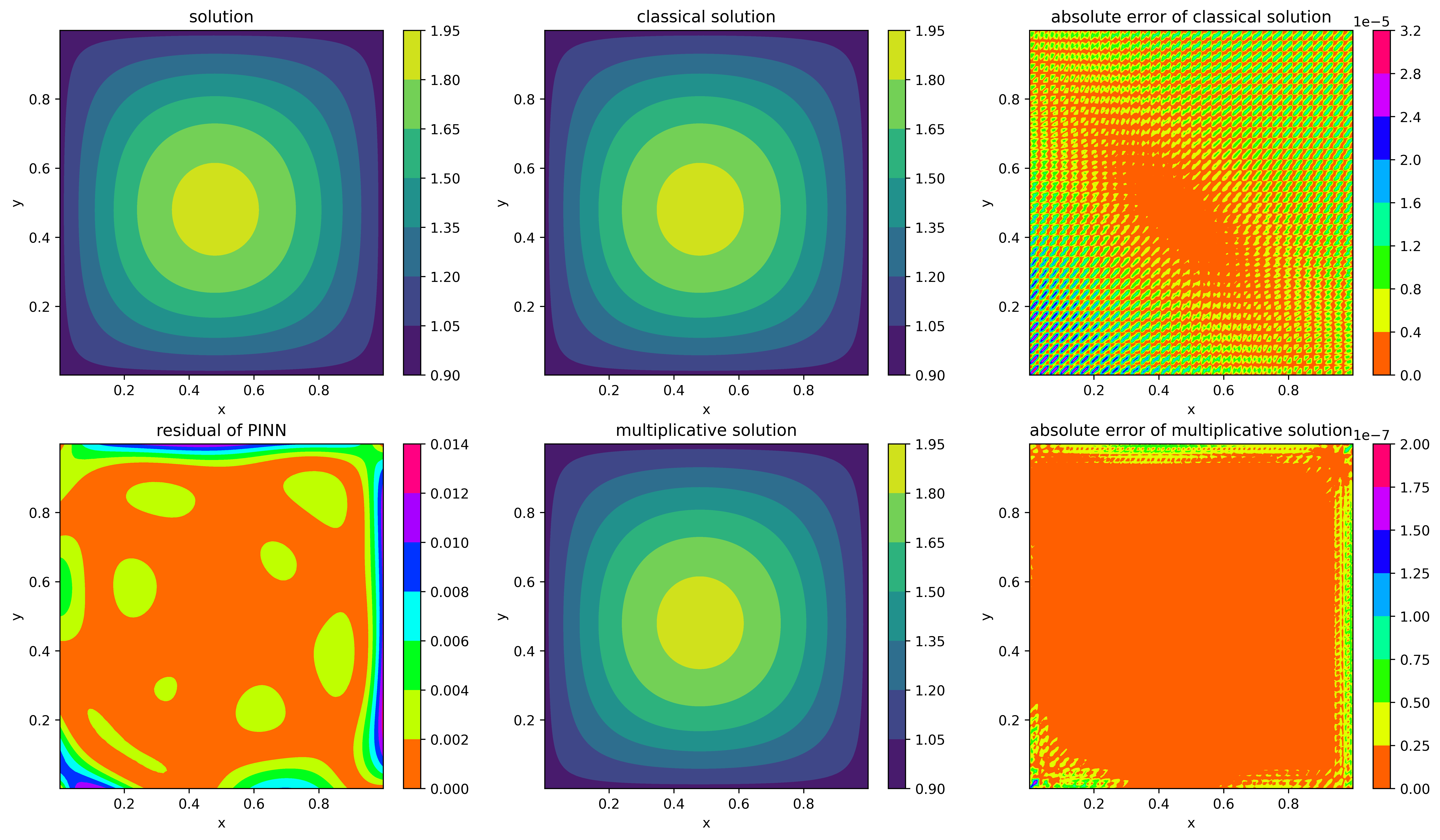}
    \caption{Comparison between solutions obtained using classical FEM and multiplicative space for cases $c(x)=-2\pi^{2}$+0.01 with $P^{2}$ Lagrange element and mesh size $h=2^{-5}$.}
    \label{fig:2d}
\end{figure}

\subsection{Poisson equation in Three Dimension}
  For our three-dimensional numerical experiments, we consider the Poisson equation with the exact solution $u(x,y,z)=e^{x+y+z}\sin(\pi x)\sin(\pi y)\sin(\pi z)+1$:
  \begin{equation}\label{5.1}
      \begin{cases}
          -\Delta u = f\quad \text{in}\ \Omega=[0,1]^{3}
          \\
          u = 1\quad \text{on}\ \partial\Omega
      \end{cases}
  \end{equation}
Table \ref{tab:4.1} lists the $L_{2}$ and $H_{1}$ errors, as well as the corresponding convergence orders. Due to memory constraints, we limit our finest mesh resolution to a tetrahedron size of $2^{-5}$, corresponding to a computational grid comprising 35937 nodes and 196608 cells.
\begin{table}[ht]
      \centering
      \begin{tabular}{|c|c|c|c|c|c|c|c|c|}
         \hline
         basis & \multicolumn{4}{|c|}{$V_{h}$}& \multicolumn{4}{|c|}{$V_{h}^{*}$}\\
         \hline
         h   &$L^{2}$error &  order &  $H^{1}$error &  order &  $L^{2}$error &  order &  $H^{1}$error & order \\
        \hline
        $2^{-1}$ &1.190& - &8.123& - &3.440e-04& - &3.623e-03& - \\
        \hline
        $2^{-2}$ &4.640e-01&1.359&5.038&0.689&1.193e-04&1.528&3.014e-03&0.265\\
        \hline
        $2^{-3}$ &1.335e-01&1.798&2.687&0.907&9.495e-05&0.330&3.069e-03&-0.026\\
        \hline
        $2^{-4}$ &3.466e-02&1.945&1.367&0.976&5.086e-05&0.900&2.257e-03&0.444\\
        \hline
        $2^{-5}$ &8.749e-03&1.986&6.863e-01&0.994&1.754e-05&1.536&1.311e-03&0.783 \\
        \hline
      \end{tabular}
      \caption{error and order using $P^{1}$ element, Poisson equation in 3d.}
      \label{tab:4.1}
  \end{table}

\section{Conclusion}
In this work, we proposed a new method combining the finite element method and physics-informed neural networks. The global approximation capability of PINNs serves to enhance the overall approximation power of the hybrid scheme. We have established a priori error estimates for both the H1 semi-norm and the L2 norm, demonstrating that the proposed method maintains the same asymptotic convergence order as classical FEM. The degree of improvement is directly correlated with the PINN's ability to accurately approximate the derivatives of solution. However, due to the inherent challenges in approximating high-dimensional functions and their higher-order derivatives, the enhancement is particularly pronounced for low-dimensional problems utilizing linear basis functions. Numerical results across various dimensions and basis functions consistently validate our theoretical findings. Furthermore, the implementation of specialized boundary operators and adaptive loss functions has significantly improved the robustness and accuracy of the PINN component.

Several promising directions for future research emerge from this work. First, the derivative information provided by PINN predictions could be leveraged to construct optimized meshes prior to FEM implementation. Second, given that the error in the modified FEM solution tends to concentrate near domain boundaries, the development of an adaptive mesh refinement strategy based on PINN predictions, rather than traditional a posteriori error estimates, warrants investigation. Third, while our current implementation focuses on linear equations, extending this framework to nonlinear problems and linear systems - such as planar elasticity or Stokes problems - presents an important research direction. Finally, the design of specialized network architectures that ensure $V_{h}^{*}$ constitutes a proper subspace of $H(\textrm{div})$ or $H(\textrm{curl})$ spaces remains an open challenge with significant potential implications.

\section{Acknowledgements}
This work was partially supported by NSFC grant 12425113.

\bibliographystyle{plain}

\begin{thebibliography}{10}

\bibitem{ainsworth19971}
Mark Ainsworth and J.Tinsley Oden.
\newblock A posteriori error estimation in finite element analysis.
\newblock {\em Computer Methods in Applied Mechanics and Engineering},
  142(1):1--88, 1997.

\bibitem{dgforell}
Douglas~N. Arnold, Franco Brezzi, Bernardo Cockburn, and Donatella Marini.
\newblock Discontinuous galerkin methods for elliptic problems.
\newblock In Bernardo Cockburn, George~E. Karniadakis, and Chi-Wang Shu,
  editors, {\em Discontinuous Galerkin Methods}, pages 89--101, Berlin,
  Heidelberg, 2000. Springer Berlin Heidelberg.

\bibitem{berg201828}
Jens Berg and Kaj Nyström.
\newblock A unified deep artificial neural network approach to partial
  differential equations in complex geometries.
\newblock {\em Neurocomputing}, 317:28--41, 2018.

\bibitem{citeulike:8936200}
Susanne~C. Brenner and Larkin~R. Scott.
\newblock {\em {The Mathematical Theory of Finite Element Methods}}, volume~15
  of {\em Texts in Applied Mathematics}.
\newblock Springer, 2008.

\bibitem{Ciarlet2002TheFE}
Philippe~G. Ciarlet.
\newblock The finite element method for elliptic problems.
\newblock In {\em Classics in applied mathematics}, 2002.

\bibitem{deryck2021732}
Tim {De Ryck}, Samuel Lanthaler, and Siddhartha Mishra.
\newblock On the approximation of functions by tanh neural networks.
\newblock {\em Neural Networks}, 143:732--750, 2021.

\bibitem{dong2021114129}
Suchuan Dong and Zongwei Li.
\newblock Local extreme learning machines and domain decomposition for solving
  linear and nonlinear partial differential equations.
\newblock {\em Computer Methods in Applied Mechanics and Engineering},
  387:114129, 2021.

\bibitem{evans}
L.C. Evans.
\newblock {\em Partial Differential Equations. 2nd Edition}.
\newblock Department of Mathematics, University of California, Berkeley,
  American Mathematical Society, 2010.

\bibitem{franck2024113144}
Emmanuel Franck, Victor Michel-Dansac, and Laurent Navoret.
\newblock Approximately well-balanced discontinuous galerkin methods using
  bases enriched with physics-informed neural networks.
\newblock {\em Journal of Computational Physics}, 512:113144, 2024.

\bibitem{guhring2021107}
Ingo Gühring and Mones Raslan.
\newblock Approximation rates for neural networks with encodable weights in
  smoothness spaces.
\newblock {\em Neural Networks}, 134:107--130, 2021.

\bibitem{hanlin}
Qing Han and Fang-Hua Lin.
\newblock {\em Elliptic partial differential equations}.
\newblock Courant lecture notes. Courant Institute of Mathematical Sciences,
  Robotics Lab, New York University, 2nd ed. edition, 2011.
\newblock Includes bibliographical references (p. 147).

\bibitem{jin2021109951}
Xiaowei Jin, Shengze Cai, Hui Li, and George~Em Karniadakis.
\newblock Nsfnets (navier-stokes flow nets): Physics-informed neural networks
  for the incompressible navier-stokes equations.
\newblock {\em Journal of Computational Physics}, 426:109951, 2021.

\bibitem{johnson1988numericalso}
Claes Johnson.
\newblock Numerical solution of partial differential equations by the finite
  element method.
\newblock {\em Acta Applicandae Mathematica}, 18(2):184--186, 1990.

\bibitem{kuty}
Gitta Kutyniok.
\newblock The mathematics of artificial intelligence, March 2022.

\bibitem{liu2020multiscaledn}
Ziqi Liu, Wei Cai, and Zhi-Qin~John Xu.
\newblock Multi-scale deep neural network (mscalednn) for solving
  poisson-boltzmann equation in complex domains.
\newblock {\em ArXiv}, abs/2007.11207, 2020.

\bibitem{lyu2022110930}
Liyao Lyu, Zhen Zhang, Minxin Chen, and Jingrun Chen.
\newblock Mim: A deep mixed residual method for solving high-order partial
  differential equations.
\newblock {\em Journal of Computational Physics}, 452:110930, 2022.

\bibitem{mu2014432}
Lin Mu, Junping Wang, Yanqiu Wang, and Xiu Ye.
\newblock Interior penalty discontinuous galerkin method on very general
  polygonal and polyhedral meshes.
\newblock {\em Journal of Computational and Applied Mathematics}, 255:432--440,
  2014.

\bibitem{pinn-fem20250114}
Sobh Nahil, Gladstone Rini, Jasmine, and Meidani Hadi.
\newblock Pinn-fem: A hybrid approach for enforcing dirichlet boundary
  conditions in physics-informed neural networks, January 2025.

\bibitem{raissi2019686}
M.~Raissi, P.~Perdikaris, and G.E. Karniadakis.
\newblock Physics-informed neural networks: A deep learning framework for
  solving forward and inverse problems involving nonlinear partial differential
  equations.
\newblock {\em Journal of Computational Physics}, 378:686--707, 2019.

\bibitem{sukumar2022114333}
N.~Sukumar and Ankit Srivastava.
\newblock Exact imposition of boundary conditions with distance functions in
  physics-informed deep neural networks.
\newblock {\em Computer Methods in Applied Mechanics and Engineering},
  389:114333, 2022.

\bibitem{sun2024115830}
Jingbo Sun, Suchuan Dong, and Fei Wang.
\newblock Local randomized neural networks with discontinuous galerkin methods
  for partial differential equations.
\newblock {\em Journal of Computational and Applied Mathematics}, 445:115830,
  2024.

\bibitem{tian2016242}
Lulu Tian, Yan Xu, J.G.M. Kuerten, and J.J.W. {van der Vegt}.
\newblock An h-adaptive local discontinuous galerkin method for the
  navier–stokes–korteweg equations.
\newblock {\em Journal of Computational Physics}, 319:242--265, 2016.

\bibitem{weinan2017thedr}
E~Weinan and Ting Yu.
\newblock The deep ritz method: A deep learning-based numerical algorithm for
  solving variational problems.
\newblock {\em Communications in Mathematics and Statistics}, 6:1 -- 12, 2017.

\end{thebibliography}

\newpage
\appendix
\section{improving the Training of PINN}
\subsection{Boundary Operator}
Traditional PINNs typically employ a penalty term (2.4) to enforce boundary conditions. However, this approach can lead to conflicting gradient directions between the PDE loss and boundary loss components during training, potentially hindering convergence to global optima. To address this issue, we implement a boundary operator that inherently satisfies the boundary conditions, thereby restricting the optimization process to the minimization of the PDE residual (2.3) alone. 

For equation (2.1), we define the boundary operator as
\begin{equation}\label{A1}
    \mathcal{B} (x,u_{\theta})=D(x)u_{\theta}(x)+g(x)=\overline{u}_{\theta}(x) 
\end{equation}
where $D(x)$ is a distance function associated with the domain $\Omega$. Specifically, $D(x)$ vanishes on the boundary $\partial \Omega$ while maintaining positive values within the interior of $\Omega$. This construction ensures that $\overline{u}_{\theta}$ exactly satisfies the prescribed boundary conditions, consequently yielding $J_{b}(\theta)=0$. The extension of this approach to handle Robin boundary conditions or complex geometric configurations has been effectively demonstrated in prior works\cite{berg201828,sukumar2022114333}. 

By implementing the boundary operator, the loss function simplifies to the following form:
\begin{equation}\label{A2}
    J(\theta)=J_{r}(\theta)= \int_{\Omega }( -\nabla \cdot (a(x)\nabla \overline{u}_{\theta}(x))+c(x)\overline{u}_{\theta}(x)-f(x)) ^{2}\mathrm{d}x.
\end{equation}

For case 5.1, loss function with penalty term becomes:
\begin{equation}
\begin{split}
    J(\theta) &= J_{r}(\theta) + 1000 J_{b}(\theta)
    \\
    &=\int_{\Omega }(-u^{''}_{\theta}(x)-f(x)) ^{2}\mathrm{d}x + 1000[(u_{\theta}(0)-u(0))^{2}+(u_{\theta}(1)-u(1))^{2}].
\end{split}
\end{equation}
The following figures demonstrate the significant improvement achieved through the implementation of the boundary operator for exact boundary condition enforcement. To ensure statistical reliability, we conducted five independent replications for each experimental configuration. The presented results correspond to the most frequently observed outcomes. While rare instances of comparable performance can be achieved using the loss function (A.3), as exemplified in \ref{fig:bdo}, the boundary operator approach consistently delivers superior results..
\begin{figure}[htbp]
    \centering
    \includegraphics[width=1.0\textwidth]{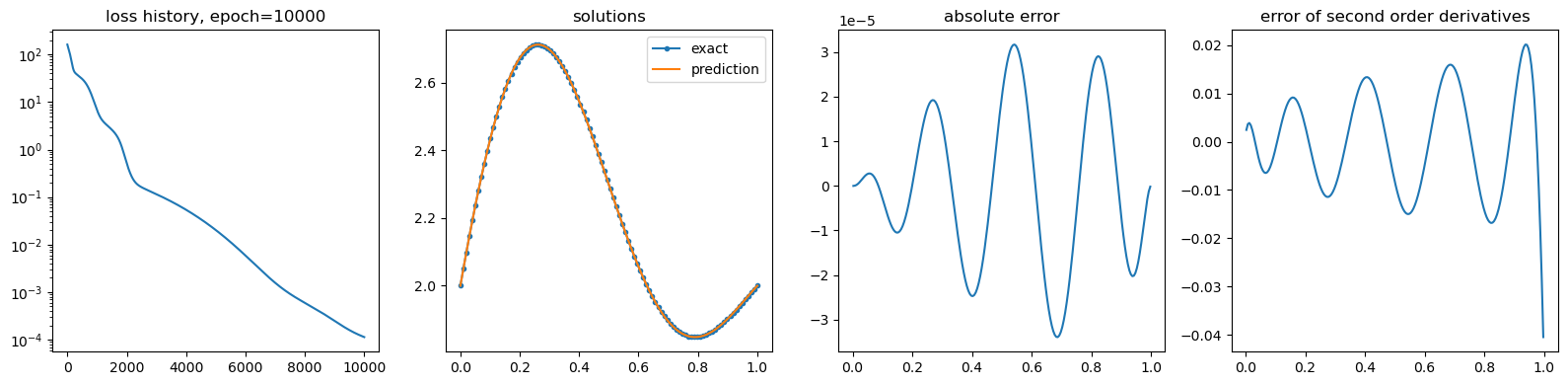}
    \caption{training with boundary operator. From left to right: training loss (A.2) with 10000 epochs using Adam; solution; error of $u-u_{\theta}$, error of second order derivative of $u-u_{\theta}$}
    \label{fig:bdo}
\end{figure}
\begin{figure}[ht]
    \centering
    \includegraphics[width=1.0\textwidth]{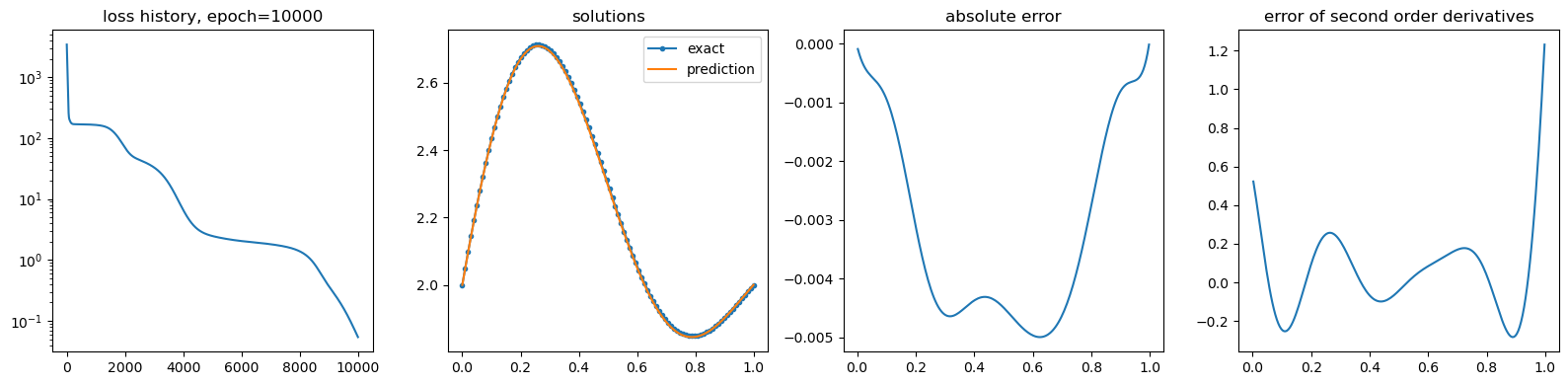}
    \caption{training with penalty term. From left to right: training loss (A.3) with 10000 epochs using Adam; solution; $u-u_{\theta}$, second order derivative of $u-u_{\theta}$.}
    \label{fig:pen}
\end{figure}

A comparative analysis of Figure \ref{fig:bdo} and Figure \ref{fig:pen} reveals that the boundary operator significantly enhances PINN training efficiency by eliminating the need for penalty terms. This improvement is particularly notable as it circumvents the common issue of slow convergence associated with local optima entrapment in traditional penalty-based approaches. The advantage becomes increasingly pronounced in high-dimensional scenarios, where the penalty method's limitations are more severe. 

Although the boundary operator is employed throughout our numerical experiments, we maintain the notation $u_{\theta}$ rather than $\overline{u}_{\theta}$ for the PINN output to preserve notational consistency and simplicity in our presentation.

\subsection{Loss Function}
Directly incorporating PDE constraints into the training process often results in prolonged training times, particularly in high-dimensional scenarios with complex solutions or derivatives. To enhance both accuracy and computational efficiency, we implement a two-phase training strategy:

1.Initial Phase: Train the PINN using the loss function $J_{r}(\theta)$. This phase continues until the loss reduction rate diminishes, yielding an intermediate solution with satisfactory $L_{2}$ error and moderate $H_{2}$ error.

2.Refinement Phase: Continue training with the loss function $J_{R}(\theta)$ until convergence, obtaining the final optimized solution.

The $J_{r}(\theta)$, loss function requires minimizing residuals across all collocation points, including second-order derivatives of the neural network. However, this approach often encounters challenges due to conflicting gradient directions at different points, making global optimization difficult. By employing (2.7) as the loss function, we can efficiently obtain a trained PINN with good $L_{2}$ and $H_{1}$ error metrics, though higher-order derivative accuracy may remain suboptimal.

The second phase, utilizing $J_{R}(\theta)$, which incorporates second-order derivatives, significantly improves the $H_{2}$ error. While this distinction is less pronounced in one-dimensional cases, we demonstrate its effectiveness through Example 5.4, comparing results from different loss functions.

To facilitate better visualization of loss function evolution, we employ logarithmic coordinates. However,  since $J_{R}(\theta)$ can assume negative values, we introduce a modified functional that remains non-negative and achieves zero only at the exact solution. This transformation is defined as follows:

\begin{equation*}
    J_{R}^{-}(\theta)=J_{R}(\theta)-J_{R}(u)
\end{equation*}
where $J_{R}(u)=\int_{\Omega }\frac{1}{2}a(x)|\nabla u(x)|^{2}+c(x)u^{2}(x)-uf(x)\mathrm{d}x$ and u is the exact solution.

\begin{figure}[htbp]
    \centering
    \includegraphics[width=1.0\textwidth]{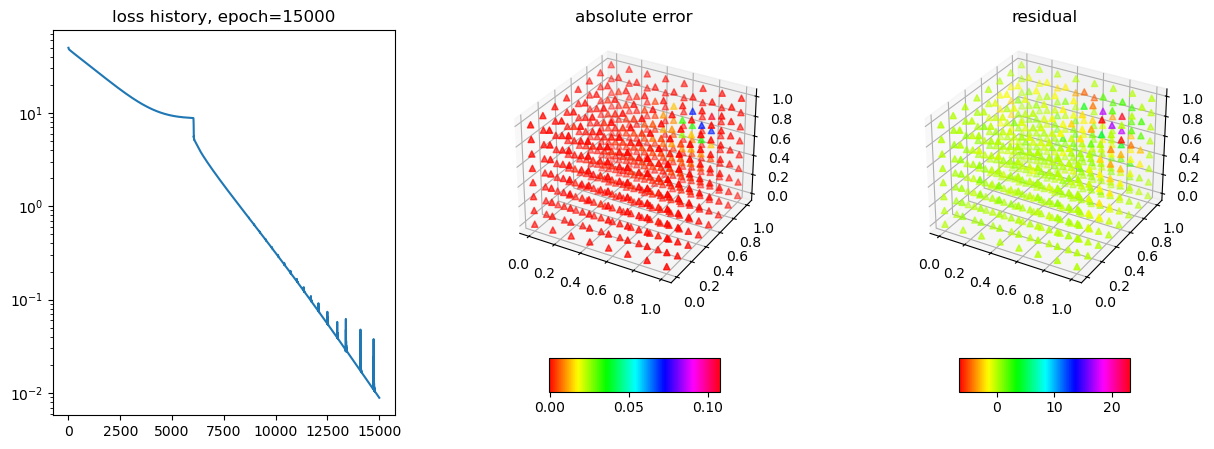}
    \caption{training with $J_{R}(\theta)$ for 15000 epochs. From left to right: training loss; $u-u_{\theta}$; residual.}
    \label{fig:Ritz}
\end{figure}

\begin{figure}[htbp]
    \centering
    \includegraphics[width=1.0\textwidth]{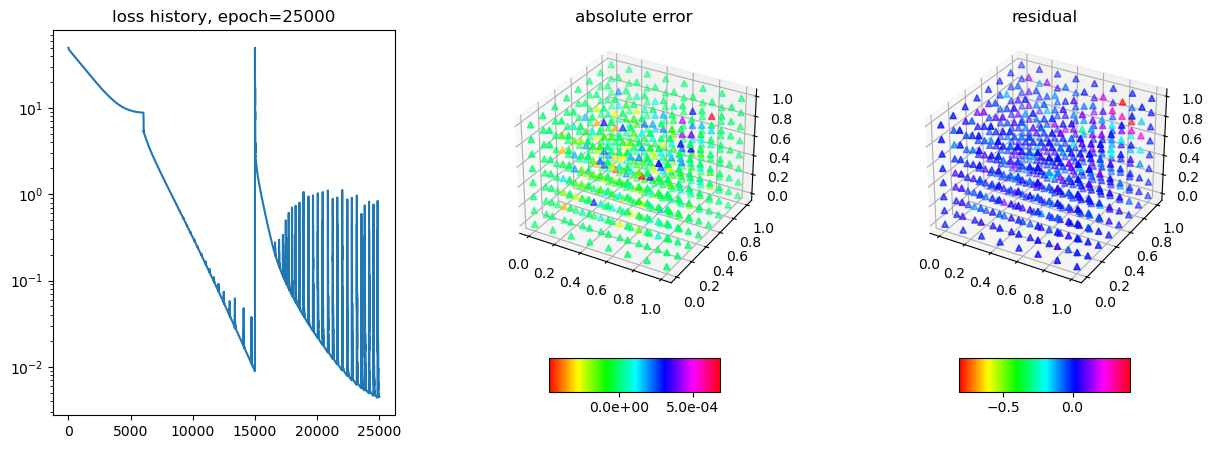}
    \caption{training with hybrid loss function: $J_{R}(\theta)$ for 15000 epochs and $J_{r}(\theta)$ for 10000 epochs. From left to right: training loss; $u-u_{\theta}$; residual.}
    \label{fig:Hybrid}
\end{figure}

\begin{figure}[htbp]
    \centering
    \includegraphics[width=1.0\textwidth]{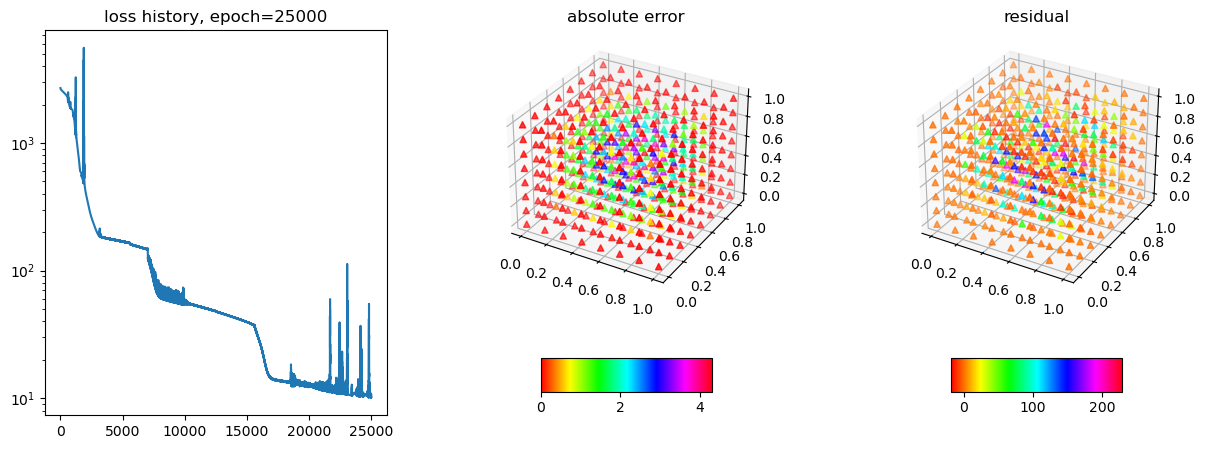}
    \caption{training with $J_{r}(\theta)$ for 25000 epochs. From left to right: training loss; $u-u_{\theta}$; residual.}
    \label{fig:PINN}
\end{figure}

Figure \ref{fig:PINN} shows that the convergence challenges encountered when using only the $J_{r}(\theta)$ loss function. As evidenced in Figure \ref{fig:Ritz}, while the Deep Ritz method achieves satisfactory results in the $L^{2}$ norm, it fails to provide comparable accuracy in the $H^{2}$ seminorm. The hybrid approach, illustrated in Figure \ref{fig:Hybrid}, reveals significant improvements when switching the loss function at epoch 15000. Notably, the solution obtained through the Deep Ritz method exhibits substantial residual errors. However, subsequent training with the $J_{r}(\theta)$ loss function for 10000 epochs achieves a remarkable two-order-of-magnitude reduction in residuals, highlighting the effectiveness of the hybrid strategy.

\end{document}